\theoremstyle{plain}
\newtheorem{theorem}{Theorem}[section]
\newtheorem{proposition}[theorem]{Proposition}
\newtheorem{lemma}[theorem]{Lemma}
\newtheorem{corollary}[theorem]{Corollary}
\theoremstyle{definition}
\newtheorem{remark}[theorem]{Remark}
\newcommand{\f}{\varphi}
\newcommand{\CC}{\mathbb C}
\newcommand{\PP}{\mathbb P}
\newcommand{\MM}{\mathbf M}
\newcommand{\E}{{\mathcal E}}
\newcommand{\F}{{\mathcal F}}
\newcommand{\I}{{\mathcal I}}
\def\O{\mathcal O}
\newcommand{\Ker}{{\mathcal Ker}}
\newcommand{\Coker}{{\mathcal Coker}}
\newcommand{\Image}{{\mathcal Im}}
\newcommand{\Ext}{\operatorname{Ext}}
\newcommand{\Hilb}{\operatorname{Hilb}}
\newcommand{\Hom}{\operatorname{Hom}}
\def\H{\operatorname{H}}
\newcommand{\M}{\operatorname{M}}
\newcommand{\Poly}{\operatorname{P}}
\newcommand{\p}{\operatorname{p}}
\newcommand{\tensor}{\otimes}
\newcommand{\lra}{\longrightarrow}
\begin{document}

\title[Moduli of sheaves supported on curves of genus $4$ in $\PP^1 \times \PP^1$]
{On the geometry of the moduli space of sheaves supported on curves of genus four contained in a quadric surface}

\author{Mario Maican}
\address{Institute of Mathematics of the Romanian Academy, Calea Grivitei 21, Bucharest 010702, Romania}

\email{maican@imar.ro}

\begin{abstract}
We study the moduli space of stable sheaves of Euler characteristic $1$
supported on curves of bidegree $(3, 3)$ contained in a smooth quadric surface.
We show that this moduli space is rational.
We compute its Betti numbers by studying the variation of the moduli spaces of $\alpha$-semi-stable pairs.
\end{abstract}

\subjclass[2010]{Primary 14D20, 14D22}
\keywords{Moduli of sheaves, Semi-stable sheaves}

\maketitle

\section{Introduction}
\label{introduction}

Given a polynomial $P(m, n)$ in two variables with rational coefficients we denote by $\M(P)$ the moduli space
of sheaves, on the product of two complex projective lines $\PP^1 \times \PP^1$,
that have Hilbert polynomial $P$ and are semi-stable with respect to the polarization $\O(1, 1)$.
This paper will be focused on the geometry of $\MM = \M(3m + 3n + 1)$.
A sheaf $\F \in \MM$ satisfies $\chi(\F) = 1$ and is supported on a curve of bidegree $(3, 3)$.
The semi-stability of $\F$ is given by two conditions: $\F$ is pure, meaning that there are no subsheaves with support of dimension zero,
and for any subsheaf $\E \subset \F$ with Hilbert polynomial $P_{\E}(m, n) = rm + sn + t$ we have the inequality of slopes
\[
\p(\E) = \frac{t}{r + s} \le \frac{1}{6} = \p(\F).
\]
From the general construction \cite{simpson} of moduli spaces of semi-stable sheaves with a fixed Hilbert polynomial we know that $\MM$ is projective
and from \cite{lepotier} we know that $\MM$ is smooth, irreducible, of dimension $19$.
We refer to the introductory section of \cite{genus_two} for more details about moduli spaces of sheaves on $\PP^1 \times \PP^1$ with linear Hilbert polynomial.

For a projective variety $X$ we define the Poincar\'e polynomial
\[
\Poly(X)(\xi) = \sum_{i \ge 0} \dim_{\mathbb Q}^{} \H^i(X, {\mathbb Q}) \xi^{i/2}.
\]
The varieties with which we will be concerned in this paper will have no odd homology, so the above will truly be a polynomial expression.

\begin{theorem}
\label{poincare_polynomial}
The Poincar\'e polynomial of $\MM$ is
\begin{multline*}
\xi^{19} + 3\xi^{18} + 10\xi^{17} + 22\xi^{16} + 41\xi^{15} + 53\xi^{14} + 60\xi^{13} + 62\xi^{12} + 63\xi^{11} + 63\xi^{10} \\
+ 63\xi^9 + 63\xi^8 + 62\xi^7 + 60\xi^6 + 53\xi^5 + 41\xi^4 + 22\xi^3 + 10\xi^2 + 3\xi + 1.
\end{multline*}
The Euler characteristic of $\MM$ is $756$.
\end{theorem}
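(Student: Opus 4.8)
The plan is to realize $\MM$ as one endpoint of a chain of birational models and to propagate Betti numbers across the intervening wall-crossings. Fix the Euler characteristic $1$ and the bidegree $(3,3)$, and for $\alpha>0$ consider the moduli space $\N(\alpha)$ of $\alpha$-semistable pairs $(\F,\f)$ with $P_\F=3m+3n+1$ and $\f\colon\O\to\F$ nonzero, where $\O=\O(0,0)$ (note $\H^0(\O)=\CC$ and $\chi(\F)=1$, so a generic stable $\F$ carries a unique such $\f$ up to scalar). As $\alpha\to 0^+$ the stability condition refers essentially only to the sheaf, while as $\alpha\to\infty$ it forces $\f$ to be generically surjective; there are finitely many critical values of $\alpha$, $\N(\alpha)$ is constant on each open chamber, and at a wall the two adjacent models are related by a flip whose two contracted loci are projective bundles over a common base $\Sigma$ (the space of graded objects $((\F',\f'),(\F'',\f''))$, the dimensions of the two projective fibres being controlled by $\Hom$ and $\Ext^1$ between the two pieces and their duals). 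Since every variety occurring will be shown to have only even-degree cohomology, the Poincar\'e polynomials obey an additive wall-crossing relation, schematically
\[
\Poly(\N(\alpha^-))(\xi)-\Poly(\N(\alpha^+))(\xi)=\Poly(\Sigma)(\xi)\bigl(\Poly(\PP^{a})(\xi)-\Poly(\PP^{b})(\xi)\bigr),
\]
summed over the components of the flip centre.

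Next I would compute the two endpoints. For $\alpha$ large, $\alpha$-semistability forces $\f$ to have zero-dimensional cokernel, so (since $\chi(\O_C)=-3$ for a curve $C$ of bidegree $(3,3)$ while $\chi(\F)=1$) the sheaf $\F$ is obtained from $\O_C$ by a length-$4$ zero-dimensional modification; hence $\N(\alpha_{\max})$ is the relative Hilbert scheme of $4$ points on the universal curve $\mathcal C\subset\PP^{15}\times(\PP^1\times\PP^1)$. Because $\O(3,3)$ is $3$-very ample, every length-$4$ subscheme imposes independent conditions on $|\O(3,3)|$, so this relative Hilbert scheme is a $\PP^{11}$-bundle over $\Hilb^4(\PP^1\times\PP^1)$; thus $\Poly(\N(\alpha_{\max}))$ equals $\Poly(\Hilb^4(\PP^1\times\PP^1))\cdot(1+\xi+\cdots+\xi^{11})$, where the first factor is read off from G\"ottsche's formula applied to the surface $\PP^1\times\PP^1$ (whose Poincar\'e polynomial is $1+2\xi+\xi^2$). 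At the other end there is a forgetful morphism $\N(0^+)\to\MM$, $(\F,\f)\mapsto\F$, with fibre $\PP(\H^0(\F))$; since $h^0(\F)-h^1(\F)=1$, this is an isomorphism over the locus $\{h^0(\F)=1\}$ and a stratified projective bundle over the Brill--Noether loci $\{h^0(\F)\ge k\}$, so $\Poly(\N(0^+))$ and $\Poly(\MM)$ differ by an explicit correction supported on those loci.

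It then remains to enumerate the walls and, at each, to identify the flip centre. For a critical $\alpha$ the strictly semistable pairs are extensions of a pair $(\F'',\f'')$ by a pair $(\F',\f')$ with prescribed linear Hilbert polynomials of equal slope $\p=\tfrac16$, and the destabilizing data are parametrized by products of low-dimensional Hilbert schemes and of moduli spaces of the same kind for smaller bidegree --- spaces such as $\M(2m+2n+t)$, $\M(m+n+t)$, and symmetric-product-type varieties --- whose Poincar\'e polynomials are either available from the literature on sheaves on $\PP^1\times\PP^1$ with linear Hilbert polynomial or are computed directly. Recording each wall contribution and summing from $\Poly(\N(\alpha_{\max}))$ downward yields $\Poly(\N(0^+))$. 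Finally one describes the Brill--Noether stratification of $\MM$: the locus $\{h^0(\F)\ge 2\}$ is again built out of Hilbert schemes and smaller moduli spaces (using $h^1(\F)=h^0$ of the dual sheaf on the curve), so its cohomology is accessible; subtracting the projective-bundle correction from $\Poly(\N(0^+))$ produces $\Poly(\MM)$. As consistency checks, the answer must be palindromic of degree $19$ with leading coefficient $1$ (smoothness and irreducibility), and its value at $\xi=1$ must be $756$.

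The main obstacle is twofold. First, producing the \emph{complete and correct} list of walls together with the precise local geometry at each --- the flip centre $\Sigma$, the fibre dimensions $a,b$, and any further stratification of $\Sigma$ that would introduce extra terms --- which requires the classification of sheaves in $\MM$ according to the type of their locally free resolution and tight control of the relevant $\Ext$-groups. Second, describing the Brill--Noether loci in $\MM$ concretely enough to compute their Betti numbers. The remaining work --- expanding the G\"ottsche-type generating series to order $q^4$, evaluating roughly a dozen wall contributions, and assembling the final polynomial --- is lengthy but entirely mechanical.
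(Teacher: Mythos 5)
Your chain --- pair moduli spaces, chamber-by-chamber wall-crossing, identification of the large-$\alpha$ endpoint with the relative Hilbert scheme of $4$ points on the universal $(3,3)$-curve (a $\PP^{11}$-bundle over $\Hilb_{\PP^1 \times \PP^1}(4)$, via the vanishing $\H^1(\I_Z(3,3))=0$), and G\"ottsche's formula --- is exactly the paper's route up to the final step, and the items you defer as ``mechanical'' (the walls at $\alpha=2,5,11$, the flip-fibre dimensions $\PP^4/\PP^2$, $\PP^3/\PP^2$, $\PP^4/\PP^3$, and the smoothness of the intermediate models, proved there by $\Ext^2$-vanishing, which is what legitimizes the additive blow-up/blow-down formula for Poincar\'e polynomials) are indeed carried out in Sections 3--4.

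The genuine gap is in your passage from $\MM^{0+}$ to $\MM$. You propose to correct $\Poly(\MM^{0+})$ by a direct analysis of the Brill--Noether loci of $\MM$, but you give no method for computing them, and the needed correction requires each stratum $\{h^0(\F)=k\}$ separately, together with its $\PP^{k-1}$-bundle structure in $\MM^{0+}$, not merely the locus $\{h^0\ge 2\}$; the assertion that its cohomology ``is accessible'' is precisely the nontrivial point, and it is not made accessible by anything in your outline. The paper sidesteps this entirely: following Choi--Chung it uses the identity
\[
\Poly(\M(3m+3n+1)) \;=\; \Poly(\M^{0+}(3m+3n+1)) \;-\; \xi\,\Poly(\M^{0+}(3m+3n-1)),
\]
in which the whole Brill--Noether correction is repackaged as a second pair moduli space with the dual Hilbert polynomial $3m+3n-1$ (over a sheaf with $h^0=k$ the two fibres contribute $\Poly(\PP^{k-1})-\xi\,\Poly(\PP^{k-2})=1$, using the duality $h^1(\F)=h^0$ of the dual sheaf that you allude to). That auxiliary space is then handled by its own, very short, wall-crossing: a single wall at $\alpha=1$ where both flip fibres are $\PP^2$, so the Poincar\'e polynomial does not change, and its large-$\alpha$ model is a $\PP^{13}$-bundle over $\Hilb_{\PP^1 \times \PP^1}(2)$. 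Without this identity (or an equivalent device turning the Brill--Noether correction into a computable moduli problem), your plan stalls exactly at the step you yourself name as the main obstacle, so as it stands it does not produce the stated polynomial or the Euler characteristic $756$.
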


\noindent
The Poincar\'e polynomial of $\M(3m + 2n + 1)$ was computed in \cite{choi_katz_klemm}, \cite{genus_two} and \cite{chung_moon_2}
by different methods. The Poincar\'e polynomial of $\M(4m + 2n + 1)$ was computed in \cite{genus_three}.
To prove Theorem \ref{poincare_polynomial} we will use the same method we used in \cite{genus_two} and \cite{genus_three},
namely variation of moduli spaces of $\alpha$-semi-stable pairs. A \emph{pair} $(\Gamma, \F)$ consists of a coherent sheaf $\F$
on $\PP^1 \times \PP^1$ and a $1$-dimensional vector subspace $\Gamma \subset \H^0(\F)$. Fix a positive rational number $\alpha$.
We restrict our attention to the case when $\F$ has linear Hilbert polynomial $P_{\F}(m, n) = rm + sn + t$.
Then the \emph{$\alpha$-slope} of $(\Gamma, \F)$ is
\[
\p_{\alpha}(\Gamma, \F) = \frac{\alpha + t}{r + s}.
\]
The $\alpha$-semi-stability of $(\Gamma, \F)$ is given by three conditions: $\F$ is pure, for any subsheaf $\E \subset \F$ we have
$\p(\E) \le \p_{\alpha}(\F)$ and for any subpair $(\Gamma, \E) \subset (\Gamma, \F)$ we have $\p_{\alpha}(\Gamma, \E) \le \p_{\alpha}(\Gamma, \F)$.
Moduli spaces $\M^{\alpha}(P)$ of $\alpha$-semi-stable pairs with fixed Hilbert polynomial $P$ have been constructed in \cite{lepotier_asterisque} and \cite{he}.
The positive real axis can be divided into finitely many intervals $(\alpha_0, \alpha_1)$, $(\alpha_1, \alpha_2)$, $ \ldots$, $(\alpha_k, \alpha_{k + 1})$, where $\alpha_0 = 0$, $\alpha_{k + 1} = \infty$, $\alpha_i$ are rational numbers,
such that $\M^{\alpha}(P)$ remains unchanged when $\alpha$ varies in an interval, and $\M^{\alpha}(P)$ changes when $\alpha$ crosses the
boundaries between intervals (which are called \emph{walls}). We write $\M^{0+}(P) = \M^{\alpha}(P)$ for $\alpha \in (0, \alpha_1)$
and $\M^{\infty}(P) = \M^{\alpha}(P)$ for $\alpha \in (\alpha_k, \infty)$.

The Poincar\'e polynomial of $\MM$ is related to the Poincar\'e polynomials of $\M^{0+}(3m + 3n + 1)$ and $\M^{0+}(3m + 3n - 1)$
by the formula from Proposition \ref{poincare_formula}. These in turn can be related to the Poincar\'e polynomials of
$\M^{\infty}(3m + 3n + 1)$ and $\M^{\infty}(3m + 3n - 1)$, which are certain flag Hilbert schemes.
In Section \ref{variation_1} we explain the relationship by a series of blow-ups and blow-downs between $\M^{\infty}(3m + 3n + 1)$
and $\M^{0+}(3m + 3n + 1)$.
In Section \ref{variation_2} we explain the relationship between $\M^{\infty}(3m + 3n - 1)$ and $\M^{0+}(3m + 3n - 1)$.
In Section \ref{preliminaries} we prove that $\M^{\infty}(3m + 3n + 1)$ is smooth.


\section{Preliminaries}
\label{preliminaries}

The purpose of this section is to prove Proposition \ref{Hilbert_scheme}, which will be needed later in the analysis of $\M^{0+}(3m + 3n + 1)$.

For any coherent sheaf $\F$ on $\PP^1 \times \PP^1$, \cite[Lemma 1]{buchdahl} provides a spectral sequence converging to $\F$,
with first level $E_1$ given by the tableau
\begin{equation}
\label{E1}
\xymatrix
{
\H^2(\F(-1, -1)) \tensor \O(-1, -1) = E_1^{-2, 2} \ar[r] & E_1^{-1, 2} \ar[r] & E_1^{0, 2} = \H^2(\F) \tensor \O \\
\H^1(\F(-1, -1)) \tensor \O(-1, -1) = E_1^{-2, 1} \ar[r] & E_1^{-1, 1} \ar[r] & E_1^{0, 1} = \H^1(\F) \tensor \O \\
\H^0(\F(-1, -1)) \tensor \O(-1, -1) = E_1^{-2, 0} \ar[r] & E_1^{-1, 0} \ar[r] & E_1^{0, 0} = \H^0(\F) \tensor \O
}
\end{equation}
All the remaining $E_1^{ij}$ are zero. The sheaves $E_1^{-1, j}$ fit into the exact sequences
\begin{equation}
\label{Ej}
\H^j(\F(0, -1)) \tensor \O(0, -1) \lra E_1^{-1, j} \lra \H^j(\F(-1, 0)) \tensor \O(-1, 0).
\end{equation}

\begin{lemma}
\label{vanishing}
Let $Z \subset \PP^1 \times \PP^1$ be a zero-dimensional subscheme of length $4$. Then $\H^1(\I_Z(3, 3)) = 0$.
\end{lemma}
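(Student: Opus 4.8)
The plan is to prove this by a direct cohomological argument, exploiting the fact that $\O(3,3)$ is a sufficiently ample line bundle relative to a length-$4$ subscheme. The cleanest approach goes through the restriction of $Z$ to rulings of the quadric. From the two projections $\PP^1 \times \PP^1 \to \PP^1$, one obtains divisors of bidegree $(1,0)$ and $(0,1)$. I would argue that because $Z$ has length $4$, one can choose a curve $C$ of bidegree $(1,1)$ (that is, a member of $|\O(1,1)|$) containing $Z$, or more to the point, choose rulings avoiding appropriate points; but the most robust path is to use the standard exact sequence
\[
0 \lra \I_Z(2,3) \lra \I_Z(3,3) \lra \I_{Z'}(3,3)|_L \lra 0,
\]
where $L$ is a ruling of bidegree $(1,0)$ and $Z' = Z \cap L$, and then induct.

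First I would reduce to a statement about points on $\PP^1$. Choosing a general ruling $L \in |\O(1,0)|$, the scheme $Z \cap L$ has length at most $1$ generically (since a length-$4$ scheme meets a general fiber in a bounded way, and in fact a general fiber is disjoint from $Z$ when $Z$ is finite), and $\I_Z(3,3)|_L \cong \O_L(3) = \O_{\PP^1}(3)$ or the ideal sheaf of a point twisted down, which has vanishing $\H^1$ since $3 \ge 1$. So from the long exact sequence it suffices to show $\H^1(\I_Z(2,3)) = 0$. Iterating this step — peeling off rulings in one direction, then the other — I reduce the positivity of the twist step by step. The key numerical input is that a length-$4$ subscheme of $\PP^1 \times \PP^1$ imposes independent conditions on $|\O(a,b)|$ once $a, b$ are not too small: concretely, $h^0(\O(3,3)) = 16 > 4$, and more importantly every length-$\le 4$ subscheme of a quadric surface imposes independent conditions on cubics of bidegree $(3,3)$ because one can always find sections of $\O(3,3)$ separating the points and their tangent directions. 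I would make this precise by noting $\O(3,3) = \O(1,1)^{\otimes 3}$ and $\O(1,1)$ is very ample (Segre embedding), so $\O(3,3)$ is $3$-very ample, hence separates any length-$4$ subscheme; $3$-very ampleness of $\O(3,3)$ on $\PP^1 \times \PP^1$ follows since $\O(1,1)$ is already very ample and tensoring very ample bundles increases the separation order.

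The main obstacle is handling \emph{non-reduced} $Z$ — clusters where several points have collided, or fat points of length $4$ supported at a single point — since for these the "general ruling is disjoint from $Z$" reasoning needs care and one must check that the local structure still imposes independent conditions. I would deal with this uniformly by invoking the $k$-very ampleness criterion: $\O(3,3)$ is $3$-very ample on $\PP^1 \times \PP^1$, meaning the restriction map $\H^0(\O(3,3)) \to \H^0(\O_Z(3,3))$ is surjective for every zero-dimensional $Z$ of length $\le 4$; surjectivity of this map is exactly the statement $\H^1(\I_Z(3,3)) = 0$. To verify $3$-very ampleness one notes that $\O(1,1)$ is $1$-very ample (i.e.\ very ample) and that the tensor product of a $k$-very ample and an $\ell$-very ample line bundle is $(k+\ell)$-very ample; applying this twice gives that $\O(3,3) = \O(1,1) \tensor \O(1,1) \tensor \O(1,1)$ is $3$-very ample, which settles the lemma. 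Alternatively, if one prefers to stay elementary and avoid citing $k$-very ampleness, the inductive peeling argument above works once one checks the single residual case where $Z$ is a fat point of length $4$ at a single point $p$, which can be done by an explicit local computation in the two affine coordinates at $p$, comparing $\dim \O_{Z}$ with the number of monomials of bidegree $\le (3,3)$ vanishing to the appropriate order — and this is where I expect the only genuine bookkeeping to lie.
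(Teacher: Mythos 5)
Your proof rests on the assertion that the separation order is additive under tensor products, so that $\O(3,3) = \O(1,1)^{\tensor 3}$ is $3$-very ample. As written ("tensoring very ample bundles increases the separation order") this is not an argument: additivity of $k$-very ampleness is not a formal consequence of very ampleness, you neither prove it nor cite a precise result, and the special case you need --- that every length-$4$ subscheme of $\PP^1 \times \PP^1$ imposes independent conditions on $\H^0(\O(3,3))$ --- is literally the lemma to be proved, so the reduction is circular as it stands. The step can be repaired, but only by invoking an actual theorem: e.g.\ Beltrametti--Sommese prove that $k$-jet ampleness is additive under tensor product and that $k$-jet ample implies $k$-very ample (any length-$(k+1)$ scheme is contained in a scheme cut out by products of powers of maximal ideals), or one can quote Di Rocco's criterion for $k$-very ampleness of line bundles on toric surfaces, which gives $\O(a,b)$ $k$-very ample exactly when $a, b \ge k$. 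Note that the paper does none of this: after treating separately the case where $Z$ lies on a ruling, it runs the Buchdahl (Beilinson-type) spectral sequence on $\I_Z(1,1)$, extracts an explicit resolution, and reads off $\H^1(\I_Z(3,3)) = 0$; so your route, once properly sourced, would be genuinely different and shorter, at the price of importing nontrivial external results.

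Your "elementary" fallback, as described, would fail. You peel off \emph{general} rulings, disjoint from $Z$: then the trace is empty, the residual scheme is still all of $Z$, and the iteration lowers only the twist, never the length. But the terminal vanishings this requires are false: for $Z$ four points on a line of bidegree $(0,1)$ one computes from the residual sequence that $h^1(\I_Z(1,3)) = 2$, and likewise $h^1(\I_Z(2,2)) = 1$ for four points on a $(1,0)$-line --- precisely the collinear configurations the paper isolates in its first case. Also, when $L$ does meet $Z$ the kernel of your restriction sequence is $\I_{Z:L}(2,3)$, not $\I_Z(2,3)$. The induction closes only if each ruling is chosen \emph{through a point} of the current residual scheme (Horace method): then length(trace) $+$ length(residual) $=$ length, so the twist and the residual length drop together, the trace on the ruling has length at most $4 \le \deg \O_L(3) + 1$, and one descends from $(3,3)$ with length $4$ down to a trivial base case; this version also disposes of the non-reduced and fat-point subschemes automatically, since the length bookkeeping never sees reducedness. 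With that correction your peeling argument becomes a complete and quite clean alternative proof; without it, the proposal has a genuine gap at exactly the special configurations where the statement is delicate.
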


\begin{proof}
Assume first that $Z$ is contained in a line of bidegree $(1, 0)$. Then we have a resolution
\[
0 \lra \O(-1, -4) \lra \O(-1, 0) \oplus \O(0, -4) \lra \I_Z \lra 0.
\]
From the long exact sequence in cohomology associated to the short exact sequence
\[
0 \lra \O(2, -1) \lra \O(2, 3) \oplus \O(3, -1) \lra \I_Z(3, 3) \lra 0
\]
we get $\H^1(\I_Z(3, 3)) = 0$. The same argument applies if $Z$ is contained in a line of bidegree $(0, 1)$.
For the remaining part of this proof we will assume that $Z$ is not contained in a line.
We apply the spectral sequence (\ref{E1}) to the sheaf $\F = \I_Z(1, 1)$.
By hypothesis, $\H^0(\I_Z(1, 0)) = 0$ and $\H^0(\I_Z(0, 1)) = 0$ hence, from sequence (\ref{Ej}) with $j = 0$, we get $E_1^{-1, 0} = 0$.
Also, $\H^0(\I_Z) = 0$, hence $E_1^{-2, 0} = 0$.
By Serre duality
\begin{align*}
\H^2(\I_Z(1, 1)) & \simeq \Hom(\I_Z(1, 1), \omega)^* = 0, \qquad \H^2(\I_Z) \phantom{(0, 1)} \simeq \Hom(\I_Z, \omega)^* = 0, \\
\H^2(\I_Z(1, 0)) & \simeq \Hom(\I_Z(1, 0), \omega)^* = 0, \qquad \H^2(\I_Z(0, 1)) \simeq \Hom(\I_Z(0, 1), \omega)^* = 0,
\end{align*}
hence $E_1^{0, 2} = 0$, $E_1^{-2, 2} = 0$ and, from sequence (\ref{Ej}) with $j = 2$ we get $E_1^{-1, 2} = 0$. We have
\begin{align*}
\dim_{\CC}^{} \H^1(\I_Z) \phantom{(0, 1)} & = - \chi(\I_Z) \phantom{(0, 1)} = 3, \\
\dim_{\CC}^{} \H^1(\I_Z(1, 0)) & = - \chi(\I_Z(1, 0)) = 2, \\
\dim_{\CC}^{} \H^1(\I_Z(0, 1)) & = - \chi(\I_Z(0, 1)) = 2.
\end{align*}
Denote $d = \dim_{\CC}^{} \H^1(\I_Z(1, 1)) = \dim_{\CC}^{} \H^0(\I_Z(1, 1))$.
Since $Z$ is not contained in a line, $Z$ is not contained in the intersection of two conics, hence $d \le 1$.
Tableau (\ref{E1}) now takes the simplified form
\[
\xymatrix
{
0 & 0 & 0 \\
3\O(-1, -1) \ar[r]^-{\f_1} & E_1^{-1, 1} \ar[r]^-{\f_2} & d\O \\
0 & 0 & d\O
}
\]
The convergence of the spectral sequence means that $\f_2$ is surjective and that we have an exact sequence
\begin{equation}
\label{convergence}
0 \lra \Ker(\f_1) \overset{\f_3}{\lra} d\O \lra \I_Z(1, 1) \lra \Ker(\f_2)/\Image(\f_1) \lra 0.
\end{equation}
From this we can compute the Hilbert polynomial of $E_1^{-1, 1}$:
\begin{align*}
P_{E_1^{-1, 1}} & = P_{\Ker(\f_2)} + P_{d\O} \\
& = P_{\Image(\f_1)} + P_{\Ker(\f_2)/\Image(\f_1)} + P_{d\O} \\
& = P_{3\O(-1, -1)} - P_{\Ker(\f_1)} + P_{\I_Z(1, 1)} - P_{d\O} + P_{\Ker(\f_1)} + P_{d\O} \\
& = P_{3\O(-1, -1)} + P_{\I_Z(1, 1)} \\
& = P_{3\O(-1, -1)} + P_{\O(1, 1)} - P_{\O_Z} \\
& = 3mn + (m + 2)(n + 2) - 4 \\
& = 4mn + 2m + 2n.
\end{align*}
On the other hand we have the exact sequence (\ref{Ej}) with $j = 1$:
\[
2\O(0, -1) \lra E_1^{-1, 1} \lra 2\O(-1, 0).
\]
Since $P_{E_1^{-1, 1}} = P_{2\O(0, -1)} + P_{2\O(-1, 0)}$, this sequence is also exact on the left and on the right, hence
$E_1^{-1, 1} \simeq 2\O(0, -1) \oplus 2\O(-1, 0)$.

Assume that $d = 0$. Then the exact sequence (\ref{convergence}) yields the resolution
\[
0 \lra 3\O(-1, -1) \lra 2\O(0, -1) \oplus 2\O(-1, 0) \lra \I_Z(1, 1) \lra 0.
\]
From the long exact cohomology sequence we get $\H^1(\I_Z(3, 3)) = 0$.

Assume that $d = 1$. Then $\Ker(\f_1) = 0$, otherwise $\Coker(\f_3)$ would be a non-zero torsion subsheaf of $\I_Z(1, 1)$,
which is absurd.
Let $V_1$ and $V_2$ be vector spaces over $\CC$ of dimension $2$ and make the identification $\PP^1 \times \PP^1 = \PP(V_1) \times \PP(V_2)$.
We choose a basis $\{ x, y \}$ of $V_1^*$ and a basis $\{ z, w \}$ of $V_2^*$.
Since $\f_2$ is surjective and $\f_1$ is injective, it is easy to see that $\f_2$ has one of the following canonical forms:
\[
\left[
\begin{array}{cccc}
1 \tensor z & 0 & x \tensor 1 & y \tensor 1
\end{array}
\right],
\]
\[
\left[
\begin{array}{cccc}
1 \tensor z & 1 \tensor w & x \tensor 1 & 0
\end{array}
\right],
\]
\[
\left[
\begin{array}{cccc}
1 \tensor z & 1 \tensor w & x \tensor 1 & y \tensor 1
\end{array}
\right].
\]
In each case it is clear that $\f_2(2, 2)$ is surjective on global sections.
From the long exact cohomology sequence associated to the short exact sequence
\[
0 \lra \Ker(\f_2)(2, 2) \lra 2\O(2, 1) \oplus 2\O(1, 2) \xrightarrow{\f_2(2, 2)} \O(2, 2) \lra 0
\]
we get $\H^1(\Ker(\f_2)(2, 2)) = 0$.
The exact sequence (\ref{convergence}) becomes
\[
0 \lra \O \lra \I_Z(1, 1) \lra \Ker(\f_2)/3\O(-1, -1) \lra 0.
\]
From long exact cohomology sequences we deduce that $\H^1(\I_Z(3, 3)) = 0$.
\end{proof}

\begin{proposition}
\label{Hilbert_scheme}
Let $\H$ be the flag Hilbert scheme of zero-dimensional subschemes of length $4$ contained in curves of bidegree $(3, 3)$ in $\PP^1 \times \PP^1$.
Then $\H$ is a $\PP^{11}$-bundle over $\Hilb_{\PP^1 \times \PP^1}(4)$, so it is smooth.
\end{proposition}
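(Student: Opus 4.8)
The plan is to realize $\H$ as the projectivization of a vector bundle over $\Hilb_{\PP^1 \times \PP^1}(4)$. First I would observe that a point of $\H$ is a pair $(Z, C)$ where $Z \in \Hilb_{\PP^1 \times \PP^1}(4)$ and $C$ is a curve of bidegree $(3,3)$ containing $Z$; equivalently, $C$ is a nonzero section of $\O(3,3)$ modulo scalars that vanishes on $Z$, i.e.\ a point of $\PP(\H^0(\I_Z(3,3)))$. So the forgetful map $\H \to \Hilb_{\PP^1 \times \PP^1}(4)$ sending $(Z,C)$ to $Z$ has fibre $\PP(\H^0(\I_Z(3,3)))$ over $Z$.

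The key input is Lemma \ref{vanishing}: for every length-$4$ subscheme $Z$ we have $\H^1(\I_Z(3,3)) = 0$. From the short exact sequence $0 \to \I_Z(3,3) \to \O(3,3) \to \O_Z(3,3) \to 0$ and the vanishing of $\H^1(\I_Z(3,3))$, together with $\H^0(\O(3,3)) = \CC^{16}$ and $\dim \H^0(\O_Z) = 4$, we get $\dim \H^0(\I_Z(3,3)) = 16 - 4 = 12$, independent of $Z$. Next I would let $\pi \colon \mathcal{Z} \to \Hilb_{\PP^1 \times \PP^1}(4)$ denote the universal subscheme, with $\mathcal{I}_{\mathcal Z}$ its ideal sheaf inside $\O_{\PP^1 \times \PP^1 \times \Hilb}$, and form the relative version of the above sequence twisted by $\O(3,3)$; pushing forward along the projection $q$ to $\Hilb_{\PP^1 \times \PP^1}(4)$ and using that the higher direct image $R^1 q_* (\mathcal I_{\mathcal Z}(3,3))$ vanishes fibrewise (again by Lemma \ref{vanishing}), cohomology and base change shows that $\E := q_* (\mathcal I_{\mathcal Z}(3,3))$ is a vector bundle of rank $12$ whose fibre at $Z$ is canonically $\H^0(\I_Z(3,3))$. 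Then $\H \simeq \PP(\E)$, which is a $\PP^{11}$-bundle over $\Hilb_{\PP^1 \times \PP^1}(4)$. Since $\Hilb_{\PP^1 \times \PP^1}(4)$ is smooth (being the Hilbert scheme of points on a smooth surface, by Fogarty) and a projective bundle over a smooth base is smooth, $\H$ is smooth.

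I expect the main technical point to be the verification that $\E$ is locally free, i.e.\ the application of cohomology and base change. This requires knowing that $h^0(\I_Z(3,3))$ is constant in $Z$ (which follows from the constancy of $\chi$ plus the $\H^1$-vanishing of Lemma \ref{vanishing}) and that $R^1 q_*$ vanishes; by Grauert's theorem constancy of $h^0$ on the fibres of a flat family over a reduced (indeed smooth) base forces local freeness of the direct image and compatibility with base change. One should also check that the flag Hilbert scheme $\H$ genuinely represents the functor of such pairs and that the natural map $\H \to \PP(\E)$ is an isomorphism of schemes, not merely a bijection on points; this is a routine comparison of the universal properties, using that a curve of bidegree $(3,3)$ containing $Z$ is the same data as a line subbundle of the pullback of $\E$.
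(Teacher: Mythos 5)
Your proof is correct and follows essentially the same route as the paper: the fibre over $Z$ is $\PP(\H^0(\I_Z(3,3)))$, and Lemma \ref{vanishing} gives $\dim_{\CC} \H^0(\I_Z(3,3)) = 12$ for every $Z$, which is exactly the paper's argument. You merely spell out the standard cohomology-and-base-change and representability details that the paper leaves implicit.
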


\begin{proof}
The fiber of the canonical morphism $\H \to \Hilb_{\PP^1 \times \PP^1}(4)$ over a zero-dimensional subscheme $Z$
is $\PP(\H^0(\I_Z(3, 3)))$. By virtue of Lemma \ref{vanishing}, the exact sequence
\[
0 \lra \I_Z(3, 3) \lra \O(3, 3) \lra \O_Z \lra 0
\]
is also exact on global sections. From this we get $\dim_{\CC}^{} \H^0(\I_Z(3, 3)) = 12$.
\end{proof}


\section{Variation of $\M^{\alpha}(3m + 3n + 1)$}
\label{variation_1}

\begin{proposition}
\label{walls}
With respect to the polynomial $P(m, n) = 3m + 3n + 1$ there are exactly three walls at $\alpha_1 = 2$, $\alpha_2 = 5$ and $\alpha_3 = 11$.
\end{proposition}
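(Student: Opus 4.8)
The plan is to detect a wall at a value $\alpha_\star>0$ by determining which strictly $\alpha_\star$-semistable pairs $(\Gamma,\F)$ with $P_\F=3m+3n+1$ can occur. Since the category of $\alpha_\star$-semistable pairs of a fixed $\alpha_\star$-slope is abelian and artinian, such a pair is S-equivalent to a direct sum of $\alpha_\star$-stable pairs, and because $\dim\Gamma=1$ exactly one summand, say $(\Gamma,\E)$, carries the section, the remaining ones forming a summand $(0,\G)$ with $\G$ a semistable sheaf. Writing $P_\E=r'm+s'n+t'$ one has $P_\G=(3-r')m+(3-s')n+(1-t')$, and equating the $\alpha_\star$-slopes $\p_{\alpha_\star}(\Gamma,\E)=\p_{\alpha_\star}(0,\G)=\p_{\alpha_\star}(\Gamma,\F)$ yields
\[
\alpha_\star=\frac{(r'+s')-6t'}{6-(r'+s')}.
\]
Thus the proposition amounts to showing that this number lies in $\{2,5,11\}$ whenever such a pair exists, and that each of these three values is actually realised.

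First I would assemble the numerical constraints. Because $\E\subseteq\F$ and $\G$ is a quotient of $\F$, both supported on a curve of bidegree $(3,3)$, we get $0\le r',s'\le 3$; purity of $\E$ forbids $(r',s')=(0,0)$, while $(r',s')=(3,3)$ is impossible since $\G$ must be a positive-dimensional sheaf. Since $\E$ is a subsheaf of the pure sheaf $\F$, the image of the section $s\colon\O\to\F$ is the structure sheaf $\O_D$ of an effective divisor $D\subseteq\operatorname{supp}\F$, with $P_{\O_D}(m,n)=am+bn+(a+b-ab)$ for integers $0\le a,b\le 3$. The decisive observation is that $(0,\F/\O_D)$ is a quotient pair of the $\alpha_\star$-semistable pair $(\Gamma,\F)$; since $P_{\F/\O_D}=(3-a)m+(3-b)n+(1-a)(1-b)$ this forces
\[
\frac{(1-a)(1-b)}{6-a-b}\ \ge\ \frac{\alpha_\star+1}{6}\qquad\text{when }a+b<6 .
\]
As $\alpha_\star>0$ the left side is positive, so $a$ and $b$ are both $\ge 2$, that is $(a,b)\in\{(2,2),(2,3),(3,2)\}$; in particular $D$ is a proper subdivisor of $\operatorname{supp}\F$. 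In the remaining case $a=b=3$, where the section generates the whole support of $\F$, one checks directly that no proper subpair or quotient pair can have $\alpha$-slope equal to $\p_\alpha(\Gamma,\F)$ for any $\alpha>0$, so no wall arises there.

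With this in hand the rest is a finite case analysis. The inclusion $(\Gamma,\O_D)\subseteq(\Gamma,\E)$ together with stability of the pair $(\Gamma,\E)$ gives $\p_{\alpha_\star}(\Gamma,\O_D)\le\p_{\alpha_\star}(\Gamma,\E)$, with equality exactly when $\E=\O_D$, and the support of $\E$ lies between $D$ and $\operatorname{supp}\F$. Running through the three admissible pairs $(a,b)$: if $\E=\O_D$ then $t'=a+b-ab$, $r'+s'=a+b$, whence $\alpha_\star=2$ for $(a,b)=(2,2)$ and $\alpha_\star=11$ for $(a,b)=(2,3),(3,2)$; if $\E\supsetneq\O_D$ one uses the slope inequalities to see that $\E/\O_D$ must be zero-dimensional of length one and $(a,b)$ must be $(2,3)$ or $(3,2)$, giving $\alpha_\star=5$; every other configuration makes the displayed formula for $\alpha_\star$ non-positive. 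Conversely, each value is attained by pairs supported on reducible curves: for $\alpha_\star=2$ take $C=C'\cup Q$ with $C'$ of bidegree $(2,2)$ and $Q$ of bidegree $(1,1)$, and $\F$ a general non-split extension $0\to\O_{C'}\to\F\to\O_Q\to 0$ with the section coming from $\O_{C'}$; for $\alpha_\star=11$ take $C=C'\cup\ell$ with $C'$ of bidegree $(2,3)$ and $\ell$ a line, and $\F$ a general extension $0\to\O_{C'}\to\F\to\O_\ell(1)\to 0$; for $\alpha_\star=5$ take the same $C$ but $\F$ a general extension $0\to\O_{C'}(x)\to\F\to\O_\ell\to 0$ with $x\in C'$, so that the section still generates $\O_{C'}$. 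In each case one verifies that $\F$ is pure, that $(\Gamma,\F)$ is $\alpha_\star$-semistable, and that it is $\alpha$-stable for $\alpha$ slightly below $\alpha_\star$ while failing to be $\alpha$-semistable for $\alpha$ slightly above, so that $\M^\alpha(3m+3n+1)$ genuinely changes at $2$, $5$ and $11$.

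The main obstacle is this last step: one must rule out uniformly the configurations in which $\E$ has support strictly larger than $D$, has generic rank greater than one, or is supported on a non-reduced curve, and one must check that the extensions exhibited above are not destabilised by any further subpair. The bound $(a,b)\in\{(2,2),(2,3),(3,2)\}$ on the divisor generated by the section is what keeps the analysis finite; the rest is bookkeeping with Hilbert polynomials of the form $r'm+s'n+(r'+s'-r's')$ of subdivisors of a $(3,3)$-curve.
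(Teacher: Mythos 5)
Your strategy is at bottom the same as the paper's: a wall forces a section-carrying destabilizing object with Hilbert polynomial $r'm+s'n+t'$ and $\alpha$-slope $(\alpha+1)/6$, and one enumerates the admissible $(r',s',t')$. The paper imports the constraint $t\ge r+s-rs$ from \cite[Proposition 5.2]{genus_two} and just solves the equation; you instead try to re-derive the constraints from scratch via the divisor generated by the section, and you add explicit extensions realizing each wall (these match the strictly semi-stable loci of Remark \ref{flipping_base}). The final numerics agree.

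As written, though, the necessity direction ("no walls other than $2,5,11$") has a genuine gap, in two related places. First, the Jordan--H\"older factor $(\Gamma,\E)$ is only a subquotient of $(\Gamma,\F)$, so neither ``$\E\subseteq\F$'' nor the inclusion $(\Gamma,\O_D)\subseteq(\Gamma,\E)$ is available, where $D$ is the divisor generated by the section inside $\F$; the divisor generated by the induced section of $\E$ is in general only a subscheme of $D$, so your bound $a,b\ge 2$ does not transfer to it. Second, your dismissal of the case $a=b=3$ is false as stated: take $C=C'\cup Q$ with $C'$ a smooth $(2,2)$-curve, $Q$ a $(1,1)$-curve, $\F=\O_{C'}\oplus\O_Q$ and $\Gamma$ spanned by the section $(1,1)$. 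Its image is $\O_C$, so $a=b=3$, yet $(\Gamma,\F)$ is strictly $2$-semi-stable, the subpair $(0,\O_Q)$ having slope $1/2=(\alpha+1)/6$ at $\alpha=2$. So equal-slope proper subpairs do occur with $a=b=3$, and your case analysis, keyed to $(a,b)\in\{(2,2),(2,3),(3,2)\}$, never sees such pairs; it only happens (and you do not prove) that they produce no new wall. Both defects are repaired by applying your ``decisive observation'' intrinsically to the stable factor: $(\Gamma,\E)$ is $\alpha_\star$-stable of $\alpha_\star$-slope $(\alpha_\star+1)/6$, its own section generates a divisor $D'\subseteq\operatorname{supp}\E$, and the subpair $(\Gamma,\O_{D'})\subseteq(\Gamma,\E)$ gives exactly your inequality, forcing both entries of the bidegree of $D'$ to be $\ge 2$; your finite case analysis then runs verbatim inside $\E$ (with the stability of $(\Gamma,\E)$ against $(\Gamma,\O_{D'})$ excluding, e.g., the configuration $D'$ of bidegree $(2,2)$ with $\E$ of multiplicity $(2,3)$), and yields $\alpha\in\{2,5,11\}$, covering the case-$(3,3)$ pairs above automatically. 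With that restructuring, and granting the routine verification you defer for the three explicit extensions, the argument is a sound, self-contained substitute for the citation the paper relies on.
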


\begin{proof}
As at \cite[Proposition 5.2]{genus_two}, we need to solve the equation
\begin{equation}
\label{alpha}
\frac{\alpha + t}{r + s} = \frac{\alpha + 1}{6}
\end{equation}
with integers $0 \le r \le 3$, $0 \le s \le 3$, $t \ge r + s - rs$, and $\alpha > 0$ a rational number, the case when $(r, s) = (3, 3)$ being excluded.
Assume that $(r, s) = (3, 2)$ or $(2, 3)$ and $t \ge -1$.
Equation (\ref{alpha}) becomes $\alpha = 5 - 6t$, which has solutions $\alpha_3 = 11$ for $t = -1$ and $\alpha_2 = 5$ for $t = 0$.
Assume that $(r, s) = (2, 2)$, $t \ge 0$. Equation (\ref{alpha}) becomes $\alpha = 2 - 3t$, which has solution $\alpha_1 = 2$ for $t = 0$.
For all other choices of $r$ and $s$ equation (\ref{alpha}) has no positive solution in $\alpha$.
\end{proof}

\noindent
Denote $\MM^{\alpha} = \M^{\alpha}(3m + 3n + 1)$. For $\alpha \in (11, \infty)$ we write $\MM^{\alpha} = \MM^{\infty}$.
For $\alpha \in (5, 11)$ we write $\MM^{\alpha} = \MM^{5+} = \MM^{11-}$.
For $\alpha \in (2, 5)$ we write $\MM^{\alpha} = \MM^{2+} = \MM^{5-}$.
For $\alpha \in (0, 2)$ we write $\MM^{\alpha} = \MM^{0+}$.
The inclusions of sets of $\alpha$-semi-stable pairs induce the flipping diagrams
\[
\xymatrix
{
\MM^{\infty} \ar[dr]_-{\rho_{\infty}} & & \MM^{11-} \ar[dl]^-{\rho_{11}} \ar@{=}[r] & \MM^{5+} \ar[dr]_-{\rho_{5+}} & & \MM^{5-} \ar[dl]^-{\rho_{5-}} \ar@{=}[r] & \MM^{2+} \ar[dr]_-{\rho_2} & & \MM^{0+} \ar[dl]^-{\rho_0} \\
& \MM^{11} & & & \MM^5 & & & \MM^2
}
\]
in which all maps are birational.

The following proposition is a particular case of \cite[Proposition B8]{pandharipande_thomas}:

\begin{proposition}
\label{M^infinity}
The variety $\MM^{\infty}$ is isomorphic to the flag Hilbert scheme of zero-dimensional subschemes of length $4$ contained in curves of bidegree $(3, 3)$
in $\PP^1 \times \PP^1$.
\end{proposition}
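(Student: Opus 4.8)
Denote by $\H$ the flag Hilbert scheme occurring in the statement, as in Proposition \ref{Hilbert_scheme}, and write $S = \PP^1 \times \PP^1$. The plan is to first describe explicitly the pairs lying in $\MM^{\infty}$, recognise them as stable pairs in the sense of Pandharipande--Thomas, and then invoke the dictionary of \cite[Proposition B8]{pandharipande_thomas} identifying such stable pairs with flags $(Z, C)$, checking along the way that the numerical invariants are the expected ones.

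First I would carry out the stability bookkeeping for a fixed rational $\alpha > \alpha_3 = 11$, which by Proposition \ref{walls} represents $\MM^{\infty}$. Write $\Gamma = \CC s$ with $0 \ne s \in \H^0(\F)$. Since $\p_{\alpha}(\Gamma, \F) = (\alpha + 1)/6$ tends to infinity with $\alpha$ while the slopes $\p(\E)$ of subsheaves $\E \subseteq \F$ are bounded above (a standard boundedness fact: the one-dimensional quotients of a fixed sheaf have Euler characteristic bounded below), the subsheaf inequality holds automatically once $\alpha$ is large. For a proper subpair $(\Gamma, \E) \subset (\Gamma, \F)$ one has $\p_{\alpha}(\Gamma, \E) = (\alpha + t_{\E})/(r_{\E} + s_{\E})$, and comparison with $(\alpha + 1)/6$ as $\alpha \to \infty$ forces $r_{\E} + s_{\E} \ge 6$, hence $r_{\E} = s_{\E} = 3$, the inequality then being strict unless $\E = \F$. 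Applying this to the smallest subsheaf containing $s(\O)$, namely $s(\O)$ itself --- a quotient of $\O_S$, hence of the form $\O_C$ for a one-dimensional subscheme $C$, and pure because it lies in the pure sheaf $\F$ --- we conclude that $C$ has no embedded points, i.e.\ is an effective Cartier divisor on $S$ of bidegree $(3, 3)$. Thus $(\Gamma, \F) \in \MM^{\infty}$ precisely when $\F$ is pure one-dimensional with $P_{\F} = 3m + 3n + 1$ and the cokernel $Q$ of $s \colon \O_S \to \F$ is zero-dimensional; moreover semi-stability coincides with stability on this chamber. Since $\chi(\O_C) = P_{\O_C}(0, 0) = -3$ (equivalently $C$ has arithmetic genus $4$), the length of $Q$ is $\chi(\F) - \chi(\O_C) = 4$.

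Next I would set up the dictionary. The Cartier divisor $C$ is Gorenstein with $\omega_C = \O_C(1, 1)$. Comparing Hilbert polynomials shows that $\F$ is supported exactly on $C$ and is generically free of rank one over $\O_C$ --- the leading term $3m + 3n$ of $P_{\F}$ already accounts for the full class of $C$, leaving no room for a thickening of the support or for generic rank $\ge 2$, and purity then makes $\F$ an honest $\O_C$-module. The section $s$ is then a generically isomorphic map $\O_C \to \F$ of rank-one torsion-free $\O_C$-modules; dualising over the Gorenstein curve $C$ yields an injection $\mathcal{H}om_{\O_C}(\F, \O_C) \hookrightarrow \mathcal{H}om_{\O_C}(\O_C, \O_C) = \O_C$ whose image is the ideal sheaf of a length-$4$ subscheme $Z \subset C$ (and $\O_Z$ is Serre-dual to $Q$). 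Conversely, a flag $(Z, C) \in \H$ produces the pair consisting of $\F = \mathcal{H}om_{\O_C}(\I_{Z/C}, \O_C)$ together with the section induced by the inclusion $\O_C = \mathcal{H}om_{\O_C}(\O_C, \O_C) \hookrightarrow \mathcal{H}om_{\O_C}(\I_{Z/C}, \O_C)$, and this pair is of the type described above. These two constructions are mutually inverse on points; this is exactly \cite[Proposition B8]{pandharipande_thomas} applied to $X = S$ with curve class $(3, 3)$ and $\chi = 1$.

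Finally, to promote the bijection to an isomorphism of schemes I would run both constructions in families. Over $\H$ there is a universal flag $\mathcal{Z} \subset \mathcal{C} \subset S \times \H$, flat over $\H$; forming $\mathcal{H}om_{\O_{\mathcal{C}}}(\I_{\mathcal{Z}/\mathcal{C}}, \O_{\mathcal{C}})$ relatively over $\H$ --- which is flat and commutes with base change since every fibre $C_h$ is Gorenstein --- and adjoining the tautological section gives a family of $\alpha$-stable pairs, hence a morphism $\H \to \MM^{\infty}$. Running the inverse construction over $\MM^{\infty}$ (the relative scheme-theoretic image of the universal section is a flat family of bidegree-$(3, 3)$ curves because the fibrewise Hilbert polynomial is constant, and one applies the relative $\mathcal{H}om$ again) gives a morphism $\MM^{\infty} \to \H$. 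Being mutually inverse on closed points and going between reduced (indeed smooth) schemes, they are mutually inverse morphisms, which gives the isomorphism. The main obstacle is precisely this family-level step --- the flatness and base-change compatibility of the dualising-sheaf constructions and of the family of support curves --- but it is contained in \cite[Proposition B8]{pandharipande_thomas}, so the only genuinely paper-specific input is the wall computation (Proposition \ref{walls}) together with the elementary bookkeeping that fixes the bidegree $(3, 3)$ and the length $4$.
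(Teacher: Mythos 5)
Your proposal is correct and follows the same route as the paper, which simply records the statement as a particular case of \cite[Proposition B8]{pandharipande_thomas} without further argument. The extra material you supply --- the verification that for $\alpha$ beyond the last wall $\alpha$-(semi-)stability amounts to purity of $\F$ plus a zero-dimensional cokernel of the section, and the bookkeeping fixing the bidegree $(3,3)$ and the length $\chi(\F) - \chi(\O_C) = 1 - (-3) = 4$ --- is exactly the routine reduction implicit in that citation, so there is nothing to object to.
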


\begin{corollary}
\label{rational}
The variety $\MM$ is rational.
\end{corollary}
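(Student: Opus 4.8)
The plan is to deduce rationality of $\MM$ from rationality of $\MM^{\infty}$ --- the two being birationally equivalent by the variation of pairs of Section \ref{variation_1} --- and then to read off rationality of $\MM^{\infty}$ from Proposition \ref{Hilbert_scheme}.

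First I would check that $\MM$ is birational to $\MM^{\infty}$. In the flipping diagram of Section \ref{variation_1} every $\rho$ is birational, so the spaces $\MM^{0+}$, $\MM^{2+} = \MM^{5-}$, $\MM^{5+} = \MM^{11-}$ and $\MM^{\infty}$ are successively birational. It remains to relate $\MM$ itself to $\MM^{0+}$ by means of the forgetful morphism $(\Gamma, \F) \mapsto \F$ from $\MM^{0+}$ to $\MM$: this is birational because a general $\F \in \MM$ --- a line bundle of degree $4$ on a general smooth curve of bidegree $(3,3)$, of genus $4$ --- satisfies $\H^1(\F) = 0$, so that $\dim_{\CC} \H^0(\F) = \chi(\F) = 1$, forcing $\Gamma = \H^0(\F)$; and for such $\F$ the pair $(\H^0(\F), \F)$ is $\alpha$-stable for all small $\alpha > 0$, as one sees by comparing $\p$-slopes of proper subpairs with $\p_{\alpha}(\F)$.

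Next I would establish that $\MM^{\infty}$ is rational. By Propositions \ref{M^infinity} and \ref{Hilbert_scheme}, $\MM^{\infty}$ is a $\PP^{11}$-bundle over $\Hilb_{\PP^1 \times \PP^1}(4)$, and since $\dim_{\CC} \H^0(\I_Z(3,3)) = 12$ does not depend on $Z$ (Lemma \ref{vanishing}), this bundle is the projectivization of a rank-$12$ locally free sheaf, hence is Zariski-locally trivial; thus $\MM^{\infty}$ is birational to $\Hilb_{\PP^1 \times \PP^1}(4) \times \PP^{11}$, and it is enough to show that $\Hilb_{\PP^1 \times \PP^1}(4)$ is rational. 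For the latter I would exhibit an explicit birational chart: writing $\CC \subset \PP^1$ for an affine chart, a general length-$4$ subscheme is a set of four points $(a_i, b_i) \in \CC \times \CC$ with the $a_i$ pairwise distinct, and via Lagrange interpolation such a subscheme corresponds bijectively to the pair $(p, q)$ in which $p$ is the monic degree-$4$ polynomial with roots $a_1, \dots, a_4$ and $q$ is the polynomial of degree at most $3$ with $q(a_i) = b_i$. The coefficients of $(p, q)$ range over a dense open subset of the affine space $\CC^4 \times \CC^4$, and the construction is invertible, so $\Hilb_{\PP^1 \times \PP^1}(4)$ is rational. (One could instead invoke that $\Hilb^4$ of a rational surface is rational, a general length-$4$ subscheme lying in a common open set, so that $\Hilb^4(\PP^1 \times \PP^1)$ is birational to the rational variety $\Hilb^4(\PP^2)$.)

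Since the substantial work is already contained in Sections \ref{preliminaries} and \ref{variation_1}, there is no genuine obstacle; the points needing a word of care are the two birationality claims --- that the forgetful map $\MM^{0+} \to \MM$ is generically an isomorphism, and that the $\PP^{11}$-bundle of Proposition \ref{Hilbert_scheme} really is the projectivization of a vector bundle (hence Zariski-locally trivial), which follows from the constancy of $\dim_{\CC} \H^0(\I_Z(3,3))$ by cohomology and base change. It is worth noting that working with $\MM$ directly would be unpromising: $\MM$ is birational to a universal degree-$4$ Jacobian over the $\PP^{15}$ of curves of bidegree $(3,3)$, whose fibres are abelian fourfolds carrying no evident rational parametrization, and it is exactly the detour through moduli of pairs that exposes the rationality.
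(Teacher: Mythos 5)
Your proposal is correct and follows essentially the same route as the paper: rationality of $\MM^{\infty}$ via Propositions \ref{M^infinity} and \ref{Hilbert_scheme}, transferred to $\MM^{0+}$ through the birational flips of Section \ref{variation_1}, and then to $\MM$ by the forgetful map, which is birational because a generic $\F \in \MM$ has $\H^0(\F) \simeq \CC$. The extra details you supply --- Zariski-local triviality of the $\PP^{11}$-bundle from the constancy of $\dim_{\CC}\H^0(\I_Z(3,3))$, the explicit rationality of $\Hilb_{\PP^1 \times \PP^1}(4)$, and the genericity argument for $h^0(\F) = 1$ --- are exactly the points the paper's proof leaves implicit, and they are correct.
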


\begin{proof}
By Propositions \ref{Hilbert_scheme} and \ref{M^infinity}, $\MM^{\infty}$ is rational.
Since $\MM^{0+}$ is birational to $\MM^{\infty}$, also $\MM^{0+}$ is rational.
The forgetful morphism $\MM^{0+} \to \MM$, $(\Gamma, \F) \mapsto [\F]$ is birational because, for a generic sheaf $\F \in \MM$,
we have $\H^0(\F) \simeq \CC$. Thus, $\MM$ is also rational.
\end{proof}

\begin{remark}
\label{flipping_base}
From the proof of Proposition \ref{walls} we see that the strictly $\alpha$-semi-stable locus in $\MM^{11}$ has two connected components:
\[
\M^{0+}(3m + 2n - 1) \times \M(n + 2) \qquad \text{and} \qquad \M^{0+}(2m + 3n - 1) \times \M(m + 2).
\]
According to \cite[Theorem 2.2]{genus_two}, a point in $\M^{0+}(3m + 2n - 1)$ is of the form $(\Gamma_1, \E_1)$,
where $\E_1$ is the structure sheaf of a curve of bidegree $(2, 3)$ and $\Gamma_1 = \H^0(\E_1)$. Thus, $\M^{0+}(3m + 2n - 1) \simeq \PP^{11}$.
A point in $\M(n + 2)$ is of the form $\O_L(0, 1)$, where $L \subset \PP^1 \times \PP^1$ is a line of bidegree $(1, 0)$. Thus, $\M(n + 2) \simeq \PP^1$.

The strictly $\alpha$-semi-stable locus in $\MM^5$ has two connected components:
\[
\M^{0+}(3m + 2n) \times \M(n + 1) \qquad \text{and} \qquad \M^{0+}(2m + 3n) \times \M(m + 1).
\]
According to \cite[Proposition 3.3]{genus_three}, the points in $\M^{0+}(3m + 2n)$ are of the form $(\Gamma_3, \E_3)$, where $\Gamma_3 = \H^0(\E_3)$
and $\E_3$ has resolution
\begin{equation}
\label{E_3}
0 \lra \O(-2, -2) \oplus \O(-1, -3) \overset{\f}{\lra} \O(-1, -2) \oplus \O \lra \E_3 \lra 0,
\end{equation}
with $\f_{11} \neq 0$, $\f_{12} \neq 0$. Moreover, $\M^{0+}(3m + 2n)$ is isomorphic to the universal quintic of bidegree $(2, 3)$,
so it is a $\PP^{10}$-bundle over $\PP^1 \times \PP^1$.
A point in $\M(n + 1)$ is of the form $\O_L$, hence $\M(n + 1) \simeq \PP^1$.

The strictly $\alpha$-semi-stable locus in $\MM^2$ is of the form $\M^{0+}(2m + 2n) \times \M(m + n + 1)$.
By \cite[Remark 5.4]{genus_two}, a point in $\M^{0+}(2m + 2n)$ is of the form $(\Gamma_5, \E_5)$,
where $\E_5$ is the structure sheaf of a curve of bidegree $(2, 2)$ and $\Gamma_5 = \H^0(\E_5)$.
Thus, $\M^{0+}(2m + 2n) \simeq \PP^8$. According to \cite[Proposition 11]{ballico_huh}, a sheaf $\E_6 \in \M(m + n + 1)$
is the structure sheaf of a curve of bidegree $(1, 1)$. Thus, $\M(m + n + 1) \simeq \PP^3$.
\end{remark}

\noindent
Consider the flipping loci
\begin{align*}
F^{\infty}_1 & = \rho_{\infty}^{-1}(\M^{0+}(3m + 2n - 1) \times \M(n + 2)) \subset \MM^{\infty}, \\
F^{\infty}_2 & = \rho_{\infty}^{-1}(\M^{0+}(2m + 3n - 1) \times \M(m + 2)) \subset \MM^{\infty}, \\
F^{\infty} & = F^{\infty}_1 \cup F^{\infty}_2, \\
F^{11}_1 & = \rho_{11}^{-1}(\M^{0+}(3m + 2n - 1) \times \M(n + 2)) \subset \MM^{11-}, \\
F^{11}_2 & = \rho_{11}^{-1}(\M^{0+}(2m + 3n - 1) \times \M(m + 2)) \subset \MM^{11-}, \\
F^{11} & = F^{11}_1 \cup F^{11}_2, \\
F^{5+}_1 & = \rho_{5+}^{-1}(\M^{0+}(3m + 2n) \times \M(n + 1)) \subset \MM^{5+}, \\
F^{5+}_2 & = \rho_{5+}^{-1}(\M^{0+}(2m + 3n) \times \M(m + 1)) \subset \MM^{5+}, \\
F^{5+} & = F^{5+}_1 \cup F^{5+}_2, \\
F^{5-}_1 & = \rho_{5-}^{-1}(\M^{0+}(3m + 2n) \times \M(n + 1)) \subset \MM^{5-}, \\
F^{5-}_2 & = \rho_{5-}^{-1}(\M^{0+}(2m + 3n) \times \M(m + 1)) \subset \MM^{5-}, \\
F^{5-} & = F^{5-}_1 \cup F^{5-}_2, \\
F^2 & = \rho_2^{-1}(\M^{0+}(2m + 2n) \times \M(m + n + 1)) \subset \MM^{2+}, \\
F^0 & = \rho_0^{-1}(\M^{0+}(2m + 2n) \times \M(m + n + 1)) \subset \MM^{0+}.
\end{align*}
Over a point $(\Lambda_1, \Lambda_2) \in \M^{0+}(3m + 2n - 1) \times \M(n + 2)$, $F^{\infty}_1$ has fiber $\PP(\Ext^1(\Lambda_1, \Lambda_2))$
and $F^{11}_1$ has fiber $\PP(\Ext^1(\Lambda_2, \Lambda_1))$.
Over a point $(\Lambda_1', \Lambda_2') \in \M^{0+}(2m + 3n - 1) \times \M(m + 2)$, $F^{\infty}_2$ has fiber $\PP(\Ext^1(\Lambda_1', \Lambda_2'))$
and $F^{11}_2$ has fiber $\PP(\Ext^1(\Lambda_2', \Lambda_1'))$.
Over a point $(\Lambda_3, \Lambda_4) \in \M^{0+}(3m + 2n) \times \M(n + 1)$, $F^{5+}_1$ has fiber $\PP(\Ext^1(\Lambda_3, \Lambda_4))$
and $F^{5-}_1$ has fiber $\PP(\Ext^1(\Lambda_4, \Lambda_3))$.
Over a point $(\Lambda_3', \Lambda_4') \in \M^{0+}(2m + 3n) \times \M(m + 1)$, $F^{5+}_2$ has fiber $\PP(\Ext^1(\Lambda_3', \Lambda_4'))$
and $F^{5-}_2$ has fiber $\PP(\Ext^1(\Lambda_4', \Lambda_3'))$.
Over a point $(\Lambda_5, \Lambda_6) \in \M^{0+}(2m + 2n) \times \M(m + n + 1)$, $F^2$ has fiber $\PP(\Ext^1(\Lambda_5, \Lambda_6))$
and $F^0$ has fiber $\PP(\Ext^1(\Lambda_6, \Lambda_5))$.

\begin{proposition}
\label{flipping_bundles}
The flipping loci are smooth bundles with fibers indicated in Table 1 below.

\begin{table}[ht]{Table 1. Fibers of the flipping loci.}
\begin{center}
\begin{tabular}{| c | c | c | c | c | c | c | c | c | c | c |}
\hline
Bundle & $F^{\infty}_1$ & $F^{11}_1$ & $F^{\infty}_2$ & $F^{11}_2$ & $F^{5+}_1$ & $F^{5-}_1$ & $F^{5+}_2$ & $F^{5-}_2$ & $F^2$ & $F^0$
\\
\hline
Fiber & $\PP^4$ & $\PP^2$ & $\PP^4$ & $\PP^2$ & $\PP^3$ & $\PP^2$ & $\PP^3$ & $\PP^2$ & $\PP^4$ & $\PP^3$
\\
\hline
\end{tabular}
\end{center}
\end{table}
\end{proposition}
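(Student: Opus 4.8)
The plan is to realise each flipping locus as a projective bundle over the corresponding component of the strictly $\alpha$-semi-stable locus, and then to compute the rank of the relevant extension bundle. First I would record that all the bases are smooth: by Remark \ref{flipping_base}, $\M^{0+}(3m + 2n - 1) \times \M(n + 2) \simeq \PP^{11} \times \PP^1$ and $\M^{0+}(2m + 2n) \times \M(m + n + 1) \simeq \PP^8 \times \PP^3$ are products of projective spaces, while $\M^{0+}(3m + 2n) \times \M(n + 1)$ is a $\PP^{10}$-bundle over $(\PP^1 \times \PP^1) \times \PP^1$; the four components obtained by interchanging the two rulings of $\PP^1 \times \PP^1$ are isomorphic to these.

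Next I would argue that over each such base $W$ the fibers assemble into a projective bundle. Write $\Lambda, \Lambda'$ for the two $S$-equivalence factors lying over a point of $W$. One has $\Ext^0(\Lambda, \Lambda') = 0$, since $\Lambda$ and $\Lambda'$ are non-isomorphic stable pairs of equal $\alpha$-slope, and $\Ext^i(\Lambda, \Lambda') = 0$ for $i \ge 3$ because $\PP^1 \times \PP^1$ is a surface and the underlying sheaves are one-dimensional. If in addition $\Ext^2(\Lambda, \Lambda') = 0$ at every point of $W$, then the relative extension complex over $W$, built from the universal pairs, has cohomology concentrated in degree $1$; its first cohomology sheaf is then locally free of rank $\dim_{\CC} \Ext^1(\Lambda, \Lambda') = - \chi(\Lambda, \Lambda')$, a purely numerical quantity, and $F$ is identified with its projectivisation. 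Smoothness of $F$ then follows from smoothness of $W$. So the proposition comes down to computing $\chi(\Lambda, \Lambda')$ in the relevant cases and to checking the vanishing $\Ext^2(\Lambda, \Lambda') = 0$.

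For the numerical part I would feed the resolutions of Remark \ref{flipping_base} into the long exact sequence relating the $\Ext$ groups of pairs to the $\Ext$ groups of the underlying sheaves and to $\Hom(\Gamma, \H^{\bullet}(-))$ (as in \cite{he} and \cite{lepotier_asterisque}), and reduce each sheaf $\Ext$ group, via the resolution of its first argument and Serre duality on $\PP^1 \times \PP^1$ (with $\omega \simeq \O(-2, -2)$), to cohomology of explicit line bundles. Since all the sheaves in sight are structure sheaves of lines of bidegree $(1, 0)$ or $(0, 1)$, twists of these, or the sheaf $\E_3$ with resolution (\ref{E_3}), the line bundles that occur are supported on a $\PP^1$, so their cohomology is immediate. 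For example, with $\Lambda_1 = (\H^0(\O_C), \O_C)$, $C$ of bidegree $(2, 3)$, and $\Lambda_2 = \O_L(0, 1)$, $L$ of bidegree $(1, 0)$, the resolution $0 \to \O(-2, -3) \to \O \to \O_C \to 0$ gives a short exact sequence $0 \to \H^0(\O_L(0, 1)) \to \H^0(\O_L(2, 4)) \to \Ext^1(\O_C, \O_L(0, 1)) \to 0$, whence $\dim_{\CC} \Ext^1(\O_C, \O_L(0, 1)) = 5 - 2 = 3$; the pair sequence then adds a term $\Hom(\H^0(\O_C), \H^0(\O_L(0, 1))) \simeq \CC^2$, so $\dim_{\CC} \Ext^1(\Lambda_1, \Lambda_2) = 5$, which is the fiber $\PP^4$ of $F^{\infty}_1$. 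In the reverse direction $\Ext^1(\Lambda_2, \Lambda_1) \simeq \Ext^1(\O_L(0, 1), \O_C) \simeq \Ext^2(\O_L(0, 1), \O(-2, -3)) \simeq \H^0(\O_L(0, 2))^* \simeq \CC^3$, which is the fiber $\PP^2$ of $F^{11}_1$. The walls at $\alpha = 5$ and $\alpha = 2$ are handled the same way, using resolution (\ref{E_3}) of $\E_3$ and the resolution of a curve of bidegree $(2, 2)$; the four entries $F^{\infty}_2, F^{11}_2, F^{5+}_2, F^{5-}_2$ of Table 1 follow from the remaining ones by the ruling symmetry.

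The step I expect to be the main obstacle is the uniformity over all of $W$ of the vanishing $\Ext^2(\Lambda, \Lambda') = 0$, hence of the dimension count, not merely at a general point. One must treat the strata where $C$ is singular, reducible or non-reduced, and, most delicately, where $\Lambda'$ lies in special position relative to $C$ — for instance when a line is a component of $C$. On such strata individual $\Hom$ and $\Ext$ groups of the underlying sheaves do jump, and the point is to verify that the pair sequence absorbs these jumps, using that the vanishing $\Hom(\Lambda, \Lambda') = 0$ in the category of pairs is itself robust. At the walls $\alpha = 11$ and $\alpha = 2$ this should be straightforward, because the obstruction terms coming out of the resolutions are cohomology of line bundles restricted to a $\PP^1$ and are insensitive to the geometry of $C$; the genuinely delicate case is the wall $\alpha = 5$, where the resolution (\ref{E_3}) of $\E_3$ depends on a morphism $\f$ varying over $W$, and one must show that the conditions $\f_{11} \neq 0$, $\f_{12} \neq 0$ already force the $\Ext$ groups of $\E_3$ against $\O_L$ to have the expected dimensions, uniformly in $\f$. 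Once this is settled, the bundle structure and all of Table 1 follow from the cohomology bookkeeping indicated above.
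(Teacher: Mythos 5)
Your proposal is correct and, at its computational core, is the paper's own argument: the entries of Table 1 come from He's long exact sequence for pairs (\cite{he}, Corollaire 1.6) combined with the resolutions (\ref{E_1}), (\ref{E_3}), (\ref{E_5}) and the Koszul resolutions of the sheaves $\O_L(0,1)$, $\O_L$, $\E_6$, reducing everything to cohomology of line bundles on $\PP^1 \times \PP^1$ and on a line; your sample computation for $F^{\infty}_1$ and $F^{11}_1$ reproduces the paper's almost verbatim. Where you differ is in the packaging: you deduce constancy of the fiber dimension from $\chi(\Lambda, \Lambda')$ together with the vanishing of $\Ext^0$ and $\Ext^2$, and then projectivize a relative extension sheaf, whereas the paper simply computes $\dim_{\CC} \Ext^1(\Lambda, \Lambda')$ at every point of the base and takes the fiberwise description of the flipping loci as given by the wall-crossing formalism. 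Your route buys a more explicit justification of the bundle structure; the paper's buys economy, and in particular the ``main obstacle'' you anticipate largely evaporates: in each computation the only group that can jump, for instance $\Hom(\E_1, \O_L(0,1))$ when $L$ is a component of the curve, appears both in the pair sequence and in the sheaf sequence and cancels, so counts such as $\dim_{\CC} \Ext^1(\Lambda_1, \Lambda_2) = 5$ hold at every point with no genericity assumption. Likewise, at the wall $\alpha = 5$ the computation uses only the shape of resolution (\ref{E_3}) and the semi-stability of $\E_3$ (to kill $\Hom(\E_3, \O_L(-2,-2))$), not the particular morphism $\f$, and the $\Ext^2$ vanishings your framework requires follow from stability plus Serre duality exactly as in Lemma \ref{ext^2}, the $\alpha = 5$ wall being no more delicate than the others.
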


\begin{proof}
Choose $\Lambda_1 = (\Gamma_1, \E_1)$ and $\Lambda_2 = (0, \O_L(0, 1))$. From \cite[Corollaire 1.6]{he} we have the exact sequence
\begin{align*}
0 = & \Hom(\Lambda_1, \Lambda_2) \lra \Hom(\E_1, \O_L(0, 1)) \lra \Hom(\Gamma_1, \H^0(\O_L(0, 1))) \simeq \CC^2 \\
\lra & \Ext^1(\Lambda_1, \Lambda_2) \lra \Ext^1(\E_1, \O_L(0, 1)) \lra \Hom(\Gamma_1, \H^1(\O_L(0, 1))) = 0.
\end{align*}
From the short exact sequence
\begin{equation}
\label{E_1}
0 \lra \O(-2, -3) \lra \O \lra \E_1 \lra 0
\end{equation}
we get the long exact sequence
\begin{align*}
0 \lra & \Hom(\E_1, \O_L(0, 1)) \lra \H^0(\O_L(0, 1)) \simeq \CC^2 \lra \H^0(\O_L(2, 4)) \simeq \CC^5 \\
\lra & \Ext^1(\E_1, \O_L(0, 1)) \lra \H^1(\O_L(0, 1)) = 0.
\end{align*}
Combining these exact sequences we obtain $\Ext^1(\Lambda_1, \Lambda_2) \simeq \CC^5$.

From \cite[Corollaire 1.6]{he} we have the exact sequence
\begin{multline*}
0 = \Hom(0, \H^0(\E_1)/\Gamma_1) \lra \Ext^1(\Lambda_2, \Lambda_1) \\
\lra \Ext^1(\O_L(0, 1), \E_1) \lra \Hom(0, \H^1(\E_1)) = 0.
\end{multline*}
From the short exact sequence
\[
0 \lra \O(-1, 1) \lra \O(0, 1) \lra \O_L(0, 1) \lra 0
\]
we get the long exact sequence
\begin{multline*}
0 = \H^0(\E_1(1, -1)) \lra \Ext^1(\O_L(0, 1), \E_1) \lra \H^1(\E_1(0, -1)) \simeq \CC^3 \\
\lra \H^1(\E_1(1, -1)) = 0.
\end{multline*}
From these exact sequences we see that $\Ext^1(\Lambda_2, \Lambda_1) \simeq \CC^3$.

Choose $\Lambda_3 = (\Gamma_3, \E_3)$ and $\Lambda_4 = (0, \O_L)$. From \cite[Corollaire 1.6]{he} we have the exact sequence
\begin{align*}
0 = & \Hom(\Lambda_3, \Lambda_4) \lra \Hom(\E_3, \O_L) \lra \Hom(\Gamma_3, \H^0(\O_L)) \simeq \CC \\
\lra & \Ext^1(\Lambda_3, \Lambda_4) \lra \Ext^1(\E_3, \O_L) \lra \Hom(\Gamma_3, \H^1(\O_L)) = 0.
\end{align*}
From resolution (\ref{E_3}) we get the long exact sequence
\begin{align*}
0 \lra & \Hom(\E_3, \O_L) \lra \H^0(\O_L(1, 2) \oplus \O_L) \simeq \CC^4 \lra \H^0(\O_L(2, 2) \oplus \O_L(1, 3)) \simeq \CC^7 \\
\lra & \Ext^1(\E_3, \O_L) \lra \H^1(\O_L(1, 2) \oplus \O_L) = 0.
\end{align*}
Combining the last two exact sequences we obtain $\Ext^1(\Lambda_3, \Lambda_4) \simeq \CC^4$.

From \cite[Corollaire 1.6]{he} we have the exact sequence
\begin{multline*}
0 = \Hom(0, \H^0(\E_3)/\Gamma_3) \lra \Ext^1(\Lambda_4, \Lambda_3) \\
\lra \Ext^1(\O_L, \E_3) \simeq \Ext^1(\E_3, \O_L(-2, -2))^* \lra \Hom(0, \H^1(\E_3)) = 0.
\end{multline*}
From resolution (\ref{E_3}) we get the long exact sequence
\begin{align*}
0 = & \Hom(\E_3, \O_L(-2, -2)) \lra \H^0(\O_L(-1, 0) \oplus \O_L(-2, -2)) \simeq \CC \lra \H^0(\O_L \oplus \O_L(-1, 1)) \simeq \CC^3 \\
\lra & \Ext^1(\E_3, \O_L(-2, -2)) \lra \H^1(\O_L(-1, 0) \oplus \O_L(-2, -2)) \simeq \CC \lra \H^1(\O_L \oplus \O_L(-1, 1)) = 0.
\end{align*}
From these exact sequences we get $\Ext^1(\Lambda_4, \Lambda_3) \simeq \CC^3$.

Choose $\Lambda_5 = (\Gamma_5, \E_5)$ and $\Lambda_6 = (\Gamma_6, \E_6)$.
From \cite[Corollaire 1.6]{he} we have the exact sequence
\begin{align*}
0 = & \Hom(\Lambda_5, \Lambda_6) \lra \Hom(\E_5, \E_6) \lra \Hom(\Gamma_5, \H^0(\E_6)) \simeq \CC \\
\lra & \Ext^1(\Lambda_5, \Lambda_6) \lra \Ext^1(\E_5, \E_6) \lra \Hom(\Gamma_5, \H^1(\E_6)) = 0.
\end{align*}
From the short exact sequence
\begin{equation}
\label{E_5}
0 \lra \O(-2, -2) \lra \O \lra \E_5 \lra 0
\end{equation}
we get the long exact sequence
\begin{align*}
0 \lra & \Hom(\E_5, \E_6) \lra \H^0(\E_6) \simeq \CC \lra \H^0(\E_6(2, 2)) \simeq \CC^5 \\
\lra & \Ext^1(\E_5, \E_6) \lra \H^1(\E_6) = 0.
\end{align*}
From these exact sequences we get $\Ext^1(\Lambda_5, \Lambda_6) \simeq \CC^5$.

From \cite[Corollaire 1.6]{he} we have the exact sequence
\[
0 = \Hom(0, \H^0(\E_5)/\Gamma_5)
\lra \Ext^1(\Lambda_6, \Lambda_5) \lra \Ext^1(\E_6, \E_5) \lra \Hom(0, \H^1(\E_5)) = 0.
\]
From the short exact sequence
\[
0 \lra \O(-1, -1) \lra \O \lra \E_6 \lra 0
\]
we get the long exact sequence
\begin{align*}
0 = & \Hom(\E_6, \E_5) \lra \H^0(\E_5) \simeq \CC \lra \H^0(\E_5(1, 1)) \simeq \CC^4 \\
\lra & \Ext^1(\E_6, \E_5) \lra \H^1(\E_5) \simeq \CC \lra \H^1(\E_5(1, 1)) = 0.
\end{align*}
From these exact sequences we get $\Ext^1(\Lambda_6, \Lambda_5) \simeq \CC^4$.
\end{proof}

\begin{lemma}
\label{ext^2}
\begin{enumerate}
\item[(i)] For $\Lambda \in F^{11}$ we have $\Ext^2(\Lambda, \Lambda) = 0$.
\item[(ii)] For $\Lambda \in F^{5-}$ we have $\Ext^2(\Lambda, \Lambda) = 0$.
\item[(iii)] For $\Lambda \in F^0$ we have $\Ext^2(\Lambda, \Lambda) = 0$.
\end{enumerate}
\end{lemma}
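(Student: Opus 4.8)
The plan is to work in the abelian category of pairs, to reduce by long exact sequences to the vanishing of $\Ext^2$ between the destabilizing constituents of $\Lambda$, and to compute the latter using He's exact sequence \cite[Corollaire 1.6]{he} together with the resolutions already at our disposal.

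\emph{Reduction to the constituents.} A point $\Lambda$ of one of these loci is a non-split extension of pairs: on $F^{11}_1$ it sits in $0 \to \Lambda_1 \to \Lambda \to \Lambda_2 \to 0$, with class in $\Ext^1(\Lambda_2, \Lambda_1)$; on $F^{5-}_1$ in $0 \to \Lambda_3 \to \Lambda \to \Lambda_4 \to 0$; and on $F^0$ in $0 \to \Lambda_5 \to \Lambda \to \Lambda_6 \to 0$; here $\Lambda_1, \Lambda_3, \Lambda_5$ carry a one-dimensional space of sections, while $\Lambda_2 = (0, \O_L(0,1))$, $\Lambda_4 = (0, \O_L)$, $\Lambda_6 = (0, \E_6)$ carry the zero space, and $F^{11}_2, F^{5-}_2$ are obtained from $F^{11}_1, F^{5-}_1$ by interchanging the two rulings of $\PP^1 \times \PP^1$. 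Write such a sequence as $0 \to A \to \Lambda \to B \to 0$. First I note that $\Ext^{\geq 3}$ vanishes in the category of pairs of one-dimensional sheaves: He's sequence gives $\Ext^2(\F_1, \F_2) \to \Hom(\Gamma_1, \H^2(\F_2)) \to \Ext^3(\Lambda_1, \Lambda_2) \to \Ext^3(\F_1, \F_2)$, and the flanking groups vanish since the supports are curves and the ambient space a surface. Applying $\Hom(-, A)$, $\Hom(-, B)$ and $\Hom(\Lambda, -)$ to $0 \to A \to \Lambda \to B \to 0$ then yields the exact sequences $\Ext^2(B, A) \to \Ext^2(\Lambda, A) \to \Ext^2(A, A) \to 0$, $\Ext^2(B, B) \to \Ext^2(\Lambda, B) \to \Ext^2(A, B) \to 0$ and $\Ext^2(\Lambda, A) \to \Ext^2(\Lambda, \Lambda) \to \Ext^2(\Lambda, B) \to 0$, so it is enough to prove $\Ext^2(X, Y) = 0$ for all four ordered pairs $X, Y \in \{A, B\}$.

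\emph{The constituent computations.} Fix $X = (\Gamma_X, \F_X)$ and $Y = (\Gamma_Y, \F_Y)$ among the $\Lambda_i$. Since $\H^2(\F_Y) = 0$, He's sequence shrinks to
\[
\Ext^1(\F_X, \F_Y) \lra \Hom(\Gamma_X, \H^1(\F_Y)) \lra \Ext^2(X, Y) \lra \Ext^2(\F_X, \F_Y) \lra 0 ,
\]
so I must check (a) $\Ext^2(\F_X, \F_Y) = 0$, and (b) surjectivity of the first arrow. For (a) I will apply $\Hom(-, \F_Y)$ to the resolution of $\F_X$ --- one of the sequences (\ref{E_1}), (\ref{E_3}), (\ref{E_5}), the structure sequence $0 \to \O(-1,-1) \to \O \to \E_6 \to 0$ of a $(1,1)$-curve, or the two-term resolution of a line bundle on a line --- which squeezes $\Ext^2(\F_X, \F_Y)$ between cohomology groups of sufficiently positive twists of $\F_Y$; a direct check shows these vanish. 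Equivalently, by Serre duality $\Ext^2(\F_X, \F_Y) \simeq \Hom(\F_Y, \F_X(-2,-2))^*$, and $\F_X(-2,-2)$ has negative degree on every component of its support, so a map out of the pure sheaf $\F_Y$ must be zero --- if the supports share a component one restricts to it, where the twisted sheaf is still of negative degree. Several of these vanishings already occur as terminal terms of the exact sequences in the proof of Proposition \ref{flipping_bundles}.

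\emph{Surjectivity, and the expected obstacle.} Condition (b) is automatic whenever $\Gamma_X = 0$ or $\H^1(\F_Y) = 0$, and an inspection shows the only remaining cases are the diagonal ones $X = Y \in \{\Lambda_1, \Lambda_3, \Lambda_5\}$. For these I use that $X$ lies in one of the smooth varieties of Remark \ref{flipping_base} --- $\PP^{11}$, the universal quintic, or $\PP^8$ --- so that $\dim \Ext^1(X, X)$ equals the dimension of that variety; combined through He's sequence with the Riemann--Roch computation of $\dim \Ext^1(\F_X, \F_X)$ and with $\Ext^2(\F_X, \F_X) = 0$ (from the resolution of $\F_X$, as in (a)), this forces $\Ext^2(X, X) = 0$. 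When $\F_X = \O_C$ is the structure sheaf of its supporting curve $C$ there is also a direct argument: from $0 \to \O(-C) \to \O \to \O_C \to 0$, both the cokernel of the pullback map $\Ext^1(\O_C, \O_C) \to \Ext^1(\O, \O_C) = \H^1(\O_C)$ and the group $\Ext^2(\O_C, \O_C)$ are controlled by $\H^1(\O_C(C))$, which vanishes because $\H^1(\O(C)) = \H^2(\O) = 0$. The surjectivity in these diagonal cases, together with the handful of cases in (a) where the supports of $\F_X$ and $\F_Y$ overlap along a component, is the only delicate point; the remainder is bookkeeping with the sequences of \cite{he}.
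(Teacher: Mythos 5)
Your strategy coincides with the paper's proof: reduce via the defining extension to the $\Ext^2$-vanishing between the constituent pairs, apply He's Corollaire 1.6 to pass to the underlying sheaves, kill the sheaf-level $\Ext^2$ by Serre duality (the paper uses stability/slope comparisons where you use degrees on components), and settle the diagonal terms $\Ext^2(\Lambda_i,\Lambda_i)$ by exactly the dimension count you describe, comparing $\dim\Ext^1(\Lambda_i,\Lambda_i)$ (the tangent space of the smooth pair moduli from Remark \ref{flipping_base}) with $\dim\Ext^1(\E_i,\E_i)$ computed from the resolutions. So the proposal is correct and follows essentially the same route as the paper.
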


\begin{proof}
(i) It is enough to consider the case when $\Lambda \in F^{11}_1$, the case when $\Lambda \in F^{11}_2$ being obtained by symmetry.
In view of the exact sequence
\[
0 \lra \Lambda_1 \lra \Lambda \lra \Lambda_2 \lra 0
\]
it suffices to show that $\Ext^2(\Lambda_i, \Lambda_j) = 0$ for $i, j = 1, 2$.
From \cite[Corollaire 1.6]{he} we have the exact sequence
\begin{multline*}
0 = \Hom(\Gamma_1, \H^1(\O_L(0, 1))) \lra \Ext^2(\Lambda_1, \Lambda_2) \\
\lra \Ext^2(\E_1, \O_L(0, 1)) \simeq \Hom(\O_L(0, 1), \E_1 \tensor \omega)^*.
\end{multline*}
The group on the right vanishes because $\O_L$ is stable, by \cite[Proposition 3.2]{genus_two}, $\E_1 \tensor \omega$ is stable,
and $\p(\O_L(0, 1)) > \p(\E_1 \tensor \omega)$. Thus, $\Ext^2(\Lambda_1, \Lambda_2) = 0$.
From the exact sequence
\begin{multline*}
0 = \Hom(0, \H^1(\E_1)) \lra \Ext^2(\Lambda_2, \Lambda_1) \\
\lra \Ext^2(\O_L(0, 1), \E_1) \simeq \Hom(\E_1, \O_L(-2, -1))^* = 0
\end{multline*}
we get the vanishing of $\Ext^2(\Lambda_2, \Lambda_1)$.
From the exact sequence
\begin{multline*}
0 = \Hom(0, \H^1(\O_L(0, 1))) \lra \Ext^2(\Lambda_2, \Lambda_2) \\
\lra \Ext^2(\O_L(0, 1), \O_L(0, 1)) \simeq \Hom(\O_L(0, 1), \O_L(-2, -1))^* = 0
\end{multline*}
we get the vanishing of $\Ext^2(\Lambda_2, \Lambda_2)$.
From \cite[Corollaire 1.6]{he} we have the exact sequence
\begin{align*}
0 = & \Hom(\Gamma_1, \H^0(\E_1)/\Gamma_1) \\
\lra & \Ext^1(\Lambda_1, \Lambda_1) \lra \Ext^1(\E_1, \E_1) \lra \Hom(\Gamma_1, \H^1(\E_1)) \simeq \CC^2 \\
\lra & \Ext^2(\Lambda_1, \Lambda_1) \lra \Ext^2(\E_1, \E_1) \simeq \Hom(\E_1, \E_1 \tensor \omega)^* = 0.
\end{align*}
The space $\Ext^1(\Lambda_1, \Lambda_1)$ is isomorphic to the tangent space of $\M^{0+}(3m + 2n - 1)$ at $\Lambda_1$,
so it is isomorphic to $\CC^{11}$.
From the short exact sequence (\ref{E_1}) we get the long exact sequence
\begin{align*}
0 \lra & \Hom(\E_1, \E_1) \overset{\simeq}{\lra} \H^0(\E_1) \lra \H^0(\E_1(2, 3)) \simeq \CC^{11} \\
\lra & \Ext^1(\E_1, \E_1) \lra \H^1(\E_1) \simeq \CC^2 \lra \H^1(\E_1(2, 3)) = 0.
\end{align*}
Thus, $\Ext^1(\E_1, \E_1) \simeq \CC^{13}$. The vanishing of $\Ext^2(\Lambda_1, \Lambda_1)$ follows from this.

\medskip

\noindent
(ii) From \cite[Corollaire 1.6]{he} we have the exact sequence
\[
0 = \Hom(\Gamma_3, \H^1(\O_L)) \to \Ext^2(\Lambda_3, \Lambda_4) \to \Ext^2(\E_3, \O_L) \simeq \Hom(\O_L, \E_3 \tensor \omega)^* = 0.
\]
Thus, $\Ext^2(\Lambda_3, \Lambda_4) = 0$. From the exact sequence
\[
0 = \Hom(0, \H^1(\E_3)) \lra \Ext^2(\Lambda_4, \Lambda_3) \lra \Ext^2(\O_L, \E_3) \simeq \Hom(\E_3, \O_L \tensor \omega)^* = 0
\]
we get the vanishing of $\Ext^2(\Lambda_4, \Lambda_3)$. From the exact sequence
\[
0 = \Hom(0, \H^1(\O_L)) \to \Ext^2(\Lambda_4, \Lambda_4) \to \Ext^2(\O_L, \O_L) \simeq \Hom(\O_L, \O_L \tensor \omega)^* = 0
\]
we get the vanishing of $\Ext^2(\Lambda_4, \Lambda_4)$. From \cite[Corollaire 1.6]{he} we have the exact sequence
\begin{align*}
0 = & \Hom(\Gamma_3, \H^0(\E_3)/\Gamma_3) \\
\lra & \Ext^1(\Lambda_3, \Lambda_3) \lra \Ext^1(\E_3, \E_3) \lra \Hom(\Gamma_3, \H^1(\E_3)) \simeq \CC \\
\lra & \Ext^2(\Lambda_3, \Lambda_3) \lra \Ext^2(\E_3, \E_3) \simeq \Hom(\E_3, \E_3 \tensor \omega)^* = 0.
\end{align*}
The space $\Ext^1(\Lambda_3, \Lambda_3)$ is isomorphic to the tangent space of $\M^{0+}(3m + 2n)$ at $\Lambda_3$,
so it is isomorphic to $\CC^{12}$. From resolution (\ref{E_3}) we have the long exact sequence
\begin{align*}
0 \to & \Hom(\E_3, \E_3) \simeq \CC \to \H^0(\E_3 \oplus \E_3(1, 2)) \simeq \CC^8 \to \H^0(\E_3(1, 3) \oplus \E_3(2, 2)) \simeq \CC^{19} \\
\to & \Ext^1(\E_3, \E_3) \phantom{\simeq \CC} \ \to \H^1(\E_3 \oplus \E_3(1, 2)) \simeq \CC \phantom{{}^8} \to \H^1(\E_3(1, 3) \oplus \E_3(2, 2)) = 0.
\end{align*}
Thus, $\Ext^1(\E_3, \E_3) \simeq \CC^{13}$. The vanishing of $\Ext^2(\Lambda_3, \Lambda_3)$ follows from this.

\medskip

\noindent
(iii) From \cite[Corollaire 1.6]{he} we have the exact sequence
\[
0 = \Hom(\Gamma_5, \H^1(\E_6)) \lra \Ext^2(\Lambda_5, \Lambda_6) \lra \Ext^2(\E_5, \E_6) \simeq \Hom(\E_6, \E_5 \tensor \omega)^* = 0.
\]
Thus, $\Ext^2(\Lambda_5, \Lambda_6) = 0$. From the exact sequence
\[
0 = \Hom(0, \H^1(\E_5)) \lra \Ext^2(\Lambda_6, \Lambda_5) \lra \Ext^2(\E_6, \E_5) \simeq \Hom(\E_5, \E_6 \tensor \omega)^* = 0
\]
we get the vanishing of $\Ext^2(\Lambda_6, \Lambda_5)$. From the exact sequence
\[
0 = \Hom(0, \H^1(\E_6)) \lra \Ext^2(\Lambda_6, \Lambda_6) \lra \Ext^2(\E_6, \E_6) \simeq \Hom(\E_6, \E_6 \tensor \omega)^* = 0
\]
we get the vanishing of $\Ext^2(\Lambda_6, \Lambda_6)$. From \cite[Corollaire 1.6]{he} we have the exact sequence
\begin{align*}
0 = & \Hom(\Gamma_5, \H^0(\E_5)/\Gamma_5) \\
\lra & \Ext^1(\Lambda_5, \Lambda_5) \lra \Ext^1(\E_5, \E_5) \lra \Hom(\Gamma_5, \H^1(\E_5)) \simeq \CC \\
\lra & \Ext^2(\Lambda_5, \Lambda_5) \lra \Ext^2(\E_5, \E_5) \simeq \Hom(\E_5, \E_5 \tensor \omega)^* = 0.
\end{align*}
The space $\Ext^1(\Lambda_5, \Lambda_5)$ is isomorphic to the tangent space of $\M^{0+}(2m + 2n)$ at $\Lambda_5$,
so it is isomorphic to $\CC^8$.
From the short exact sequence (\ref{E_5}) we get the long exact sequence
\begin{align*}
0 \lra & \Hom(\E_5, \E_5) \overset{\simeq}{\lra} \H^0(\E_5) \phantom{\simeq \CC} \, \lra \H^0(\E_5(2, 2)) \simeq \CC^8 \\
\lra & \Ext^1(\E_5, \E_5) \lra \H^1(\E_5) \simeq \CC \lra \H^1(\E_5(2, 2)) = 0.
\end{align*}
Thus, $\Ext^1(\E_5, \E_5) \simeq \CC^9$. From this we get the vanishing of $\Ext^2(\Lambda_5, \Lambda_5)$.
\end{proof}

\begin{theorem}
\label{wall_crossing}
Let $\MM^{\alpha}$ be the moduli space of $\alpha$-semi-stable pairs on $\PP^1 \times \PP^1$ having Hilbert polynomial $P(m, n) = 3m + 3n + 1$.
We have the following commutative diagrams expressing the variation of $\MM^{\alpha}$ as $\alpha$ crosses the walls:
\[
\xymatrix
{
& \widetilde{\MM}^{\infty} \ar[dl]_-{\beta_{\infty}} \ar[dr]^-{\beta_{11}} & & & \widetilde{\MM}^{5+} \ar[dl]_-{\beta_{5+}} \ar[dr]^-{\beta_{5-}} & & & \widetilde{\MM}^{2+} \ar[dl]_-{\beta_2} \ar[dr]^-{\beta_0} \\
\MM^{\infty} \ar[dr]_-{\rho_{\infty}} & & \MM^{11-} \ar[dl]^-{\rho_{11}} \ar@{=}[r] & \MM^{5+} \ar[dr]_-{\rho_{5+}} & & \MM^{5-} \ar[dl]^-{\rho_{5-}} \ar@{=}[r] & \MM^{2+} \ar[dr]_-{\rho_2} & & \MM^{0+} \ar[dl]^-{\rho_0} \\
& \MM^{11} & & & \MM^5 & & & \MM^2
}
\]
Here each $\beta_i$ is the blow-up with center $F^i$.
Moreover, $\beta_{11}$ contracts the component $\widetilde{F}^{\infty}_1$ of the exceptional divisor $\widetilde{F}^{\infty}$ in the direction of $\PP^4$,
where we view $\widetilde{F}^{\infty}_1$ as a $\PP^4 \times \PP^2$-bundle over $\M^{0+}(3m + 2n - 1) \times \M(n + 2)$;
$\beta_{11}$ contracts the component $\widetilde{F}^{\infty}_2$ of $\widetilde{F}^{\infty}$ in the direction of $\PP^4$,
where we view $\widetilde{F}^{\infty}_2$ as a $\PP^4 \times \PP^2$-bundle over $\M^{0+}(2m + 3n - 1) \times \M(m + 2)$;
$\beta_{5-}$ contracts the component $\widetilde{F}^{5+}_1$ of the exceptional divisor $\widetilde{F}^{5+}$ in the direction of $\PP^3$,
where we view $\widetilde{F}^{5+}_1$ as a $\PP^3 \times \PP^2$-bundle over $\M^{0+}(3m + 2n) \times \M(n + 1)$;
$\beta_{5-}$ contracts the component $\widetilde{F}^{5+}_2$ of $\widetilde{F}^{5+}$ in the direction of $\PP^3$,
where we view $\widetilde{F}^{5+}_2$ as a $\PP^3 \times \PP^2$-bundle over $\M^{0+}(2m + 3n) \times \M(m + 1)$;
$\beta_0$ contracts the exceptional divisor $\widetilde{F}^2$ in the direction of $\PP^4$,
where we view $\widetilde{F}^2$ as a $\PP^4 \times \PP^3$-bundle over $\M^{0+}(2m + 2n) \times \M(m + n + 1)$.
The spaces $\MM^{11-}$, $\MM^{5-}$ and $\MM^{0+}$ are smooth.
\end{theorem}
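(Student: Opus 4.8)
The plan is to establish each wall-crossing as a standard Thaddeus-style flip realized by a single blow-up followed by a blow-down, using the description of the flipping loci from Proposition \ref{flipping_bundles} and the $\Ext^2$-vanishing from Lemma \ref{ext^2}. First I would recall the general mechanism from \cite[Section 3]{genus_two} (which itself follows \cite{thaddeus}, \cite{he}): when $\alpha$ crosses a wall $\alpha_i$, a pair $(\Gamma, \F)$ that is $\alpha_i$-strictly-semi-stable sits in a non-split extension $0 \to \Lambda' \to \Lambda \to \Lambda'' \to 0$ (with $\Lambda'$, $\Lambda''$ the two Jordan--H\"older factors), the $(\alpha_i + \epsilon)$-stable pairs being the extensions in one direction and the $(\alpha_i - \epsilon)$-stable pairs the extensions in the other direction. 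Thus $\MM^{\alpha_i+}$ and $\MM^{\alpha_i-}$ differ precisely along the flipping loci, which are projective bundles $\PP(\Ext^1(\Lambda', \Lambda''))$ resp.\ $\PP(\Ext^1(\Lambda'', \Lambda'))$ over the product of moduli of the factors (these products, and the fibers, being exactly as listed in Remark \ref{flipping_base} and Table 1).

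Next I would argue the blow-up statements. The key input is that $\MM^\infty$ is smooth (Proposition \ref{M^infinity} together with Proposition \ref{Hilbert_scheme}), that each flipping locus $F^i$ is a smooth subvariety (Proposition \ref{flipping_bundles}), and that one side of each wall-crossing is the blow-up of the other along $F^i$. Concretely, to show $\beta_\infty \colon \widetilde{\MM}^\infty \to \MM^\infty$ is the blow-up of $\MM^\infty$ along $F^\infty$, I would identify, following the deformation argument of \cite{he} and \cite{genus_two}, the normal bundle of $F^\infty$ in $\MM^\infty$ with a bundle whose projectivization is $\widetilde{F}^\infty$; the universal family of extensions gives a map from $\widetilde{\MM}^\infty$ to $\MM^\infty$ that is an isomorphism away from $F^\infty$ and which, on a slice transverse to $F^\infty$, is cut out by the relevant $\Ext^1$. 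The same argument with the roles reversed, over the wall $\alpha_i$, gives $\beta_{11}$, $\beta_{5+}$, $\beta_{5-}$, $\beta_2$, $\beta_0$ as blow-ups with the indicated centers. The contraction statements for $\beta_{11}$, $\beta_{5-}$, $\beta_0$ are then immediate: the exceptional divisor $\widetilde{F}^i$ over the wall is a $\PP^a \times \PP^b$-bundle over the product of moduli of factors (with $\PP^a$ the fiber of $F^{\alpha+}$ and $\PP^b$ the fiber of $F^{\alpha-}$), and the blow-down on the other side collapses the $\PP^a$-factor, exactly as asserted.

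Finally, for the smoothness claims: $\widetilde{\MM}^\infty$ is smooth because it is the blow-up of the smooth variety $\MM^\infty$ along the smooth center $F^\infty$; then $\MM^{11-} = \MM^{5+}$ is smooth because it is obtained from the smooth $\widetilde{\MM}^\infty$ by the blow-down $\beta_{11}$, which contracts a $\PP^4$-bundle structure on $\widetilde F^\infty$ onto the smooth base $F^{11}$ — here one uses the Fujiki--Nakano / Castelnuovo-type criterion that contracting a smooth divisor which is a projective bundle in the normal direction yields a smooth variety (this is where Lemma \ref{ext^2}(i), guaranteeing unobstructedness of the pairs in $F^{11}$, i.e.\ smoothness of $\MM^{11-}$ at those points, is used to confirm the contracted locus is a smooth subvariety with the expected normal bundle). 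Iterating, $\widetilde{\MM}^{5+}$ is the blow-up of the smooth $\MM^{5+}$ along the smooth $F^{5+}$, hence smooth; $\MM^{5-}=\MM^{2+}$ is smooth by the blow-down $\beta_{5-}$ and Lemma \ref{ext^2}(ii); $\widetilde{\MM}^{2+}$ is smooth; and $\MM^{0+}$ is smooth by the blow-down $\beta_0$ and Lemma \ref{ext^2}(iii). I expect the main obstacle to be the precise normal-bundle computation identifying each exceptional divisor with $\PP(\Ext^1)$ compatibly with the product bundle structure over $\M^{0+}(\cdots) \times \M(\cdots)$, and in particular checking that the wall-crossing is an honest blow-up rather than a more complicated modification — this requires the full strength of the $\Ext^2$-vanishing in Lemma \ref{ext^2} to rule out obstructions, together with a careful bookkeeping of the $\Ext^1$-dimensions from Proposition \ref{flipping_bundles} so that the fiber dimensions match those in Table 1.
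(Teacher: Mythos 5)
Your proposal follows essentially the same skeleton as the paper's proof: smoothness of $\MM^{\infty}$ from Propositions \ref{M^infinity} and \ref{Hilbert_scheme}, smoothness of the flipping loci from Proposition \ref{flipping_bundles}, the $\Ext^2$-vanishing of Lemma \ref{ext^2}, and the wall-crossing mechanism of \cite{genus_two} and \cite{he}. The one real divergence is how the blow-up/blow-down structure is certified. The paper does not compute normal bundles or invoke a Fujiki--Nakano type contraction criterion: it first establishes that $\MM^{11-}$ (and likewise $\MM^{5-}$, $\MM^{0+}$) is smooth directly by deformation theory --- away from $F^{11}$ it is isomorphic to an open subset of the smooth $\MM^{\infty}$, and at points of $F^{11}$ smoothness follows from $\Ext^2(\Lambda, \Lambda) = 0$ (Lemma \ref{ext^2}(i)) --- and then invokes the universal property of the blow-up, whose two inputs are exactly smoothness of the center $F^{11}$ (Proposition \ref{flipping_bundles}) and smoothness of $\MM^{11-}$, to identify $\widetilde{\MM}^{\infty}$ with the blow-up of $\MM^{11-}$ along $F^{11}$; the other squares are handled the same way. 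Your order of reasoning here is backwards and, as written, slightly circular: you assert that $\MM^{11-}$ is smooth \emph{because} it is the target of the blow-down $\beta_{11}$, but the existence of $\beta_{11}$ as a contraction onto a smooth target is precisely what must be proved; your parenthetical remark that Lemma \ref{ext^2}(i) gives unobstructedness, i.e.\ smoothness of $\MM^{11-}$ at the points of $F^{11}$, is the correct and independent argument and should come first (note also that the smoothness of $F^{11}$ itself comes from Proposition \ref{flipping_bundles}, not from Lemma \ref{ext^2}). With the logic reordered this way your argument matches the paper's; your alternative route via an explicit normal-bundle identification and a contraction criterion could be made to work, but it demands strictly more verification (the restriction of the normal bundle of $\widetilde{F}^{\infty}$ to the $\PP^4$-fibers, and the identification of the resulting contraction with the moduli space $\MM^{11-}$) than the universal-property argument the paper borrows from \cite[Theorem 5.7]{genus_two}.
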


\begin{proof}
The proof of this theorem is analogous to the proof of \cite[Theorem 5.7]{genus_two}.
By Propositions \ref{M^infinity} and \ref{Hilbert_scheme}, $\MM^{\infty}$ is smooth.
We consider the blow-up $\beta_{\infty}$ along the subvariety $F^{\infty}$, which, according to Proposition \ref{flipping_bundles}, is smooth.
We construct $\beta_{11}$, which contracts $\widetilde{F}^{\infty}_1$ into $F^{11}_1$ and $\widetilde{F}^{\infty}_2$ into $F^{11}_2$.
To prove that $\beta_{11}$ is a blow-up with center $F^{11}$ we use the Universal Property of the blow-up, which requires two ingredients:
that $F^{11}$ be smooth and that $\MM^{11-}$ be smooth.
We proved the first property at Proposition \ref{flipping_bundles} and the second property at Lemma \ref{ext^2}(i).
For the other squares in the diagram we proceed analogously.
\end{proof}


\section{Variation of $\M^{\alpha}(3m + 3n - 1)$}
\label{variation_2}

\begin{proposition}
\label{poincare_formula}
We have the following equation of Poincar\'e polynomials:
\[
\Poly(\M(3m + 3n + 1)) = \Poly(\M^{0+}(3m + 3n + 1)) - \xi \Poly(\M^{0+}(3m + 3n - 1)).
\]
\end{proposition}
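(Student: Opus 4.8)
The plan is to exploit the forgetful morphism $\M^{0+}(3m+3n+1) \to \M(3m+3n+1) = \MM$ and the structure of its fibers. For a pure sheaf $\F$ supported on a curve of bidegree $(3,3)$ with $\chi(\F)=1$, the set of pairs $(\Gamma,\F)$ with $\Gamma \subset \H^0(\F)$ a line lies over $[\F]$ and, when $(\Gamma,\F)$ is $0^+$-stable, is simply $\PP(\H^0(\F))$. The $0^+$-semistability condition on $(\Gamma,\F)$ reduces, for $\alpha$ small, to the semistability of $\F$ together with the requirement that $\Gamma$ not be contained in $\H^0(\E)$ for any destabilizing-in-the-limit subsheaf; since $\p(\F)=1/6$ is not a slope achieved by any proper subsheaf with the relevant numerics except along a locus of smaller dimension, the generic fiber is $\PP^0$, i.e.\ $\H^0(\F)\simeq\CC$, as already noted in the proof of Corollary \ref{rational}. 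The first step is therefore to stratify $\MM$ by $h^0(\F) = \dim\H^0(\F)$: let $\MM_k \subset \MM$ be the locally closed locus where $h^0(\F) = k$. Over $\MM_k$ the morphism $\M^{0+}(3m+3n+1) \to \MM$ restricts to a $\PP^{k-1}$-bundle (Zariski-locally trivial, since $\H^0$ of a flat family is coherent and locally free on each stratum after shrinking). This gives
\[
\Poly(\M^{0+}(3m+3n+1)) = \sum_{k\ge 1} \Poly(\MM_k)\,(1 + \xi + \cdots + \xi^{k-1}),
\]
where $\Poly$ of a locally closed variety means its (virtual) Poincaré polynomial in the sense of the Hodge–Deligne / Serre characteristic, which is additive over stratifications and multiplicative over Zariski-locally trivial fibrations; since all the spaces in sight have pure cohomology and no odd part, this virtual invariant agrees with the honest Poincaré polynomial for the projective varieties.

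The second step is to do the same for $\M^{0+}(3m+3n-1)$. The key observation — this is where one must invoke the structure theory, presumably the analysis in Section \ref{variation_2} (or the analogue of \cite[Section 2]{genus_two}) — is that twisting or a standard duality/serre-functor argument identifies, over each stratum $\MM_k$ with $k \ge 2$, the relevant fiber contribution of $\M^{0+}(3m+3n-1)$. Concretely, applying $\Hom(-, \O)$ or comparing with $h^1$: for $\F \in \MM$ with $h^0(\F) = k$ one has, by Riemann–Roch on the curve of genus $4$ (where $\chi = 1$ forces $h^1 = k$), that the "dual" sheaf relevant to $\M(3m+3n-1)$ has a corresponding stratification. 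I expect the clean statement to be that $\M^{0+}(3m+3n-1) \to \M(3m+3n-1)$ has $\PP^{k-1}$-fibers over a stratum, and that $\M(3m+3n-1)$ itself is stratified in a way that, combined with the above, produces the telescoping. The cleanest route is likely: show directly that there is a variety $\mathcal{N}$ (the incidence variety of pairs) mapping to both, realizing $\M^{0+}(3m+3n+1)$ with its fiber over $\MM_k$ being $\PP^{k-1}$ and $\xi\cdot\M^{0+}(3m+3n-1)$ contributing $\xi(1+\cdots+\xi^{k-2})$ over the same stratum, so that the difference is $(1+\xi+\cdots+\xi^{k-1}) - \xi(1+\cdots+\xi^{k-2}) = 1$ on each stratum, and the sum collapses to $\sum_k \Poly(\MM_k) = \Poly(\MM)$.

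The main obstacle is identifying precisely the geometry relating $\M^{0+}(3m+3n-1)$ to $\MM$ stratum-by-stratum — i.e.\ producing the $\PP^{k-2}$-bundle statement and verifying it holds scheme-theoretically (flatness, local triviality) over each $\MM_k$. This rests on knowing that for $\F\in\MM_k$ the sheaf entering the genus-$4$, $\chi=-1$ picture is controlled by $\H^1(\F)$ (dimension $k$), via the exact sequence $0 \to \Gamma \to \H^0(\F) \to \H^0(\F)/\Gamma \to 0$ and the wall-crossing at $\alpha = 0$ for the polynomial $3m+3n-1$; one must check the strata match up (same $\MM_k$ indexing, no extra components appearing only on one side) and that the boundary strata where semistability degenerates are handled — these should be dealt with by the general principle that such loci have codimension large enough, or more safely by the additivity of the virtual Poincaré polynomial which does not require any such estimate. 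Once the per-stratum bundle structures are in place, the identity is the one-line telescoping computation above, and the transition from virtual to genuine Poincaré polynomials is immediate from the stated absence of odd cohomology.
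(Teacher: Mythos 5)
Your overall strategy is the paper's own: the paper proves Proposition \ref{poincare_formula} simply by referring to the proof of Lemma 5.1 of Choi--Chung, and that argument is exactly your stratification of $\MM$ by the loci $\MM_k$ where $h^0(\F)=k$, the $\PP^{k-1}$-bundle structure of $\M^{0+}(3m+3n+1)$ over $\MM_k$, and the telescoping identity $(1+\xi+\cdots+\xi^{k-1})-\xi(1+\xi+\cdots+\xi^{k-2})=1$, with additivity of the virtual Poincar\'e polynomial over the strata. Two small points you half-state can in fact be said cleanly: for the Hilbert polynomials $3m+3n\pm 1$ there are no strictly semistable sheaves at all (no proper subsheaf can have slope exactly $\pm 1/6$), so for small $\alpha$ the pair conditions are vacuous and the fiber of the forgetful morphism over \emph{every} $[\F]$ is the full $\PP(\H^0(\F))$, not just generically; and surjectivity onto $\MM$ holds because $\chi(\F)=1$ forces $h^0(\F)\ge 1$.

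However, the step you yourself flag as the ``main obstacle'' --- matching $\M^{0+}(3m+3n-1)$ to the same strata $\MM_k$ --- is precisely the content of the lemma, and your proposal only conjectures it (``I expect the clean statement to be\,\dots''), so as written the proof is incomplete. The missing idea is the duality for pure one-dimensional sheaves on the surface: set $\F^{\dual}:=\mathcal{E}xt^1_{\O}(\F,\omega)$ with $\omega=\O(-2,-2)$. Then $\F^{\dual\dual}\simeq\F$, the Hilbert polynomial of $\F^{\dual}$ is $3m+3n-1$, and $\F^{\dual}$ is stable if and only if $\F$ is, so $\F\mapsto\F^{\dual}$ gives an isomorphism $\M(3m+3n+1)\to\M(3m+3n-1)$. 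Since $\mathcal{E}xt^i(\F,\omega)=0$ for $i\neq 1$ by purity, the local-to-global spectral sequence and Serre duality give $\H^0(\F^{\dual})\simeq\Ext^1(\F,\omega)\simeq\H^1(\F)^*$. Hence over $\MM_k$, where $h^0=k$ and $h^1=k-1$ (your parenthetical ``$\chi=1$ forces $h^1=k$'' is off by one, since $\chi=h^0-h^1=1$; your final telescoping formula uses the correct value), the forgetful morphism $\M^{0+}(3m+3n-1)\to\M(3m+3n-1)\simeq\MM$ has fiber $\PP(\H^1(\F)^*)\simeq\PP^{k-2}$, empty when $k=1$. This is exactly the $\PP^{k-2}$-bundle statement your computation needs; without naming this duality there is no way to match the strata on the two sides, and with it your argument becomes the proof the paper cites. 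As you correctly note, no codimension estimates are required beyond this, because the virtual Poincar\'e polynomial is additive and all the spaces involved have pure, even cohomology.
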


\noindent
The proof of this proposition is analogous to the proof of \cite[Lemma 5.1]{choi_chung}.
The Poincar\'e polynomial of $\M^{0+}(3m + 3n + 1)$ can be computed by relating it to the Poincar\'e polynomial of $\M^{\infty}(3m + 3n + 1)$
via Theorem \ref{wall_crossing}. Likewise, in order to compute $\Poly(\M^{0+}(3m + 3n -1))$, we will find a relation between $\M^{0+}(3m + 3n -1)$
and $\M^{\infty}(3m + 3n - 1)$.

\begin{proposition}
\label{walls_2}
With respect to the polynomial $P(m, n) = 3m + 3n - 1$ there is exactly one wall at $\alpha = 1$.
\end{proposition}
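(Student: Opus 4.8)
The plan is to mimic the wall-finding argument used for Proposition \ref{walls}, solving the slope-matching equation for all admissible Hilbert polynomials of subsheaves. For $P(m,n) = 3m + 3n - 1$ the slope is $\p(\F) = -1/6$, so the relevant equation, for a subsheaf $\E \subset \F$ with $P_{\E}(m,n) = rm + sn + t$, is
\[
\frac{\alpha + t}{r + s} = \frac{\alpha - 1}{6},
\]
to be solved in rationals $\alpha > 0$ with integers $0 \le r \le 3$, $0 \le s \le 3$, the bound $t \ge r + s - rs$ coming from purity of a sheaf supported on a curve of bidegree $(r,s)$ on the quadric, and the case $(r,s) = (3,3)$ excluded. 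The equation rearranges to $\alpha(6 - r - s) = (r+s) + 6t$, so $\alpha = \bigl((r+s) + 6t\bigr)/(6 - r - s)$; for this to be positive we need $6 - r - s > 0$ (i.e. $(r,s) \ne (3,3)$) together with $(r+s) + 6t > 0$.

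First I would run through the boundary cases where $r + s$ is largest. For $(r,s) = (3,2)$ or $(2,3)$ the bound is $t \ge -1$, and $\alpha = (5 + 6t)/1 = 5 + 6t$; with $t = -1$ this is negative, with $t = 0$ it gives $\alpha = 5$, with $t \ge 1$ it exceeds the range relevant to later sections — but in fact the genuine subsheaf-destabilizing behaviour must be checked against the bound $\p_\alpha(\Gamma,\E) \le \p_\alpha(\Gamma,\F)$ too, and here one sees these do not produce walls in $(0,\infty)$ that survive; I will need to confirm that $t = 0$, $\alpha = 5$ corresponds to no strictly semistable pair once one re-examines which inequalities are relevant for $3m+3n-1$, because the twist is by $-1$ rather than $+1$. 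Next, $(r,s) = (2,2)$ gives $t \ge 0$ and $\alpha = (4 + 6t)/2 = 2 + 3t$, so $t = 0$ yields $\alpha = 2$; again I must verify whether this is a genuine wall. Then $(r,s)$ with $r + s = 3$: $(3,0)$ or $(0,3)$ force $t \ge 3$ (from $t \ge r+s-rs = 3$), $(2,1)$ or $(1,2)$ force $t \ge 1$; in each subcase $\alpha = ((r+s)+6t)/3$, and I would check which small values of $t$ land in $(0,\infty)$ and correspond to actual semistable subpairs. Finally $r + s \le 2$ with $(r,s) \ne (2,2)$, i.e. $(2,0),(0,2),(1,1),(1,0),(0,1)$, handled the same way; many of these give $6-r-s \ge 4$ and forced large $t$, so $\alpha$ is either out of range or the subpair cannot actually embed.

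The main obstacle is bookkeeping of which numerical solutions correspond to genuine walls: a solution $\alpha_0$ of the slope equation is only a wall if there is an $\alpha_0$-semistable pair $(\Gamma,\F)$ admitting a saturated subpair (or subsheaf) realizing equality, and one must rule out the spurious solutions coming from $t$'s that, while arithmetically allowed by purity, cannot occur as the Hilbert polynomial of a subsheaf of a sheaf of bidegree $(3,3)$ with $\chi = -1$ while leaving a semistable quotient. I expect that after this pruning only $\alpha = 1$ survives — coming, by analogy with Remark \ref{flipping_base}, from the decomposition of a $(3,3)$-curve structure sheaf twist into a bidegree-$(2,2)$ piece and a bidegree-$(1,1)$ piece whose slopes agree at $\alpha = 1$ — so I would identify that strictly semistable locus explicitly (presumably $\M^{0+}(2m+2n-1) \times \M(m+n)$ or the analogous flag-pair product), thereby confirming $\alpha = 1$ is a wall and that it is the only one.
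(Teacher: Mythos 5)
There is a genuine error: your rearrangement of the slope equation has a sign mistake, and it derails the whole enumeration. From $6(\alpha + t) = (r+s)(\alpha - 1)$ one gets $\alpha\,(6 - r - s) = -\bigl((r+s) + 6t\bigr)$, i.e.\ $\alpha = -\bigl((r+s) + 6t\bigr)/(6 - r - s)$, not $+\bigl((r+s)+6t\bigr)/(6-r-s)$ as you wrote. Positivity of $\alpha$ therefore forces $(r+s) + 6t < 0$, i.e.\ $t$ sufficiently negative, which together with $t \ge r + s - rs$ is only possible for $(r,s) = (3,2)$ or $(2,3)$ with $t = -1$, giving $\alpha = -5 - 6t = 1$. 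Your formula instead produces the spurious candidates $\alpha = 5$ (from $(3,2)$, $t=0$) and $\alpha = 2$ (from $(2,2)$, $t=0$) --- which are in fact the walls of $3m+3n+1$, not of $3m+3n-1$ --- and dismisses the true solution $t = -1$ as giving a negative $\alpha$. The subsequent promise to ``prune'' the list down to $\alpha = 1$ is not carried out and could not succeed from that list, since $5$ and $2$ simply do not solve the equation and $1$ never appears in your case analysis. The paper's proof is exactly the numerical computation you intended, done with the correct sign: $\alpha = -5 - 6t$ for $(r,s) = (3,2),(2,3)$, solution $\alpha = 1$ at $t = -1$, and no positive solutions for any other $(r,s)$.

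A secondary inaccuracy: your guessed origin of the wall, a splitting into a bidegree-$(2,2)$ piece and a bidegree-$(1,1)$ piece (something like $\M^{0+}(2m+2n-1) \times \M(m+n)$), is not what happens. For $(r,s) = (2,2)$ equality of slopes at $\alpha = 1$ would require $t = -1$, which violates $t \ge r + s - rs = 0$. The wall at $\alpha = 1$ comes from the $(3,2)$ and $(2,3)$ subpairs with $t = -1$, and the strictly semi-stable locus is $\M^{0+}(3m+2n-1) \times \M(n)$ together with $\M^{0+}(2m+3n-1) \times \M(m)$, as recorded in Remark \ref{flipping_base_2}.
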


\begin{proof}
We need to solve the equation
\begin{equation}
\label{alpha_2}
\frac{\alpha + t}{r + s} = \frac{\alpha - 1}{6}
\end{equation}
with integers $0 \le r \le 3$, $0 \le s \le 3$, $t \ge r + s - rs$, $(r, s) \neq (3, 3)$, and $\alpha > 0$ a rational number.
Assume that $(r, s) = (3, 2)$ or $(2, 3)$ and $t \ge -1$.
Equation (\ref{alpha_2}) becomes $\alpha = -5 - 6t$, which has solution $\alpha = 1$ for $t = -1$.
For all other choices of $r$ and $s$ equation (\ref{alpha_2}) has no positive solution in $\alpha$.
\end{proof}

\noindent
For $\alpha \in (1, \infty)$ we write $\M^{\alpha}(3m + 3n - 1) = \M^{\infty}(3m + 3n - 1)$.
For $\alpha \in (0, 1)$ we write $\M^{\alpha}(3m + 3n - 1) = \M^{0+}(3m + 3n - 1)$.
The inclusions of sets of $\alpha$-semi-stable pairs induce the flipping diagram
\[
\xymatrix
{
\M^{\infty}(3m + 3n - 1) \ar[dr]_-{\rho_{\infty}} & & \M^{0+}(3m + 3n - 1) \ar[dl]^-{\rho_0} \\
& \M^1(3m + 3n - 1)
}
\]
in which both maps are birational.

\begin{proposition}
\label{M^infinity_2}
The variety $\M^{\infty}(3m + 3n - 1)$ is isomorphic to the flag Hilbert scheme of zero-dimensional subschemes of length $2$
contained in curves of bidegree $(3, 3)$ in $\PP^1 \times \PP^1$.
\end{proposition}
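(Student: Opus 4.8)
The statement to prove is Proposition \ref{M^infinity_2}: $\M^{\infty}(3m+3n-1)$ is isomorphic to the flag Hilbert scheme of length-$2$ subschemes of $\PP^1\times\PP^1$ contained in curves of bidegree $(3,3)$.

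The plan is to invoke the same general structure result that was used for Proposition \ref{M^infinity}, namely \cite[Proposition B8]{pandharipande_thomas}. The key point is that for $\alpha$ very large, an $\alpha$-semistable pair $(\Gamma,\F)$ with $\F$ having Hilbert polynomial $3m+3n-1$ must have $\F$ globally generated (up to the section in $\Gamma$) in the appropriate sense, and the cokernel of the evaluation map $\O \to \F$ coming from the section, twisted suitably, identifies the pair with a flag. Concretely, a pair in $\M^{\infty}$ consists of a purely $1$-dimensional sheaf $\F$ supported on a curve $C$ of bidegree $(3,3)$ together with $0\ne s\in\H^0(\F)$ that does not vanish on any subcurve; then $\F\cong\I_{Z,C}(3,3)$ for a curve $C$ of bidegree $(3,3)$ and a length-$2$ subscheme $Z\subset C$. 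This is because $\chi(\F) = -1 = \chi(\O_C(3,3)) - 2$ forces the twist: $\chi(\O_C(a,b)) = (a+b) \cdot (\text{something})$; more precisely for $C$ of bidegree $(3,3)$, which has arithmetic genus $4$, one has $\chi(\O_C) = -3$, and $\O_C(3,3)$ has $\chi = 3\cdot 3 + 3\cdot 3 + \chi(\O_C)$... let me not grind, but the arithmetic pins down length $2$.

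First I would recall \cite[Proposition B8]{pandharipande_thomas}, which in this setting says that $\M^{\infty}(P)$ for $P$ the Hilbert polynomial of $\O_C(3,3)$ twisted down is the relative Hilbert scheme / flag Hilbert scheme parametrizing pairs $(Z\subset C)$ with $C$ a curve in the linear system $|\O(3,3)|$ and $Z$ a length-$\ell$ subscheme, where $\ell$ is determined by the Euler characteristic. Then I would carry out the Euler characteristic bookkeeping: a sheaf $\F$ with $P_\F = 3m+3n-1$ supported on $C$ of bidegree $(3,3)$ and torsion-free of rank $1$ on $C$ is of the form $\I_{Z,C}\tensor\O_C(3,3)$ (after the canonical identification that makes the section correspond to the inclusion $\I_{Z,C}\hookrightarrow\O_C$) precisely when $\chi(\O_C(3,3)) - \chi(\F) = \deg Z$; computing $\chi(\O_C(3,3))$ from the exact sequence $0\to\O(0,0)\to\O(3,3)\to\O_C(3,3)\to 0$ gives $\chi(\O_C(3,3)) = \chi(\O(3,3)) - \chi(\O) = 16 - 1 = 15$... that gives $\deg Z = 16$, which is wrong, so the correct normalization is the one where the section gives $\O_C\hookrightarrow\F$ with $\F/\O_C$ of length $\ell$; then $\chi(\F) = \chi(\O_C) + \ell = -3 + \ell = -1$, so $\ell = 2$. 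This is the clean computation to present.

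The main obstacle — and the only real content beyond citing the reference — is verifying that the hypotheses of \cite[Proposition B8]{pandharipande_thomas} are met: that for $\alpha$ past the single wall at $\alpha = 1$ (Proposition \ref{walls_2}), the $\alpha$-semistability condition for the pair is equivalent to the section $s$ generating $\F$ as a sheaf modulo lower-dimensional pieces, i.e.\ that the cokernel of $\O\xrightarrow{s}\F$ is a sheaf of pure dimension $0$ and, dually, that no destabilizing subpair survives. One shows that for $\alpha$ large the only relevant subpairs $(\Gamma,\E)\subset(\Gamma,\F)$ are those with $\E\supset\Image(s)$, and the inequality $\p_\alpha(\Gamma,\E)\le\p_\alpha(\Gamma,\F)$ forces $\E = \F$; combined with purity of $\F$ this gives exactly that $\F$ is the twist of an ideal sheaf of a length-$2$ subscheme on its support curve. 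Hence I would spend the bulk of the proof on this dictionary, then conclude that the resulting moduli functor is represented by the flag Hilbert scheme $\H$ of length-$2$ subschemes contained in bidegree-$(3,3)$ curves, giving the claimed isomorphism.

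\begin{proof}
This is a particular case of \cite[Proposition B8]{pandharipande_thomas}, by the same reasoning as in the proof of Proposition \ref{M^infinity}. We indicate the identification. Let $(\Gamma, \F)$ be a pair with Hilbert polynomial $P_{\F}(m, n) = 3m + 3n - 1$ that is $\alpha$-semi-stable for $\alpha > 1$, and write $\Gamma = \CC s$. By Proposition \ref{walls_2} there are no walls beyond $\alpha = 1$, so the $\alpha$-semi-stability is the one at $\alpha = \infty$. The sheaf $\F$ is pure of dimension $1$, supported on a curve $C$ of bidegree $(3, 3)$, and is torsion-free of rank $1$ on $C$. For $\alpha$ large, the inequality $\p_{\alpha}(\Gamma, \E) \le \p_{\alpha}(\Gamma, \F)$ imposed on all subpairs $(\Gamma, \E) \subset (\Gamma, \F)$ forces the subsheaf generated by the image of $s$ to be all of $\F$ outside a zero-dimensional set; since $\F$ is pure, the morphism $\O \xrightarrow{s} \F$ is injective with cokernel $Q$ of dimension $0$. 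Thus $\F$ is obtained from $\O_C$ by an elementary modification at a length-$\ell$ subscheme $Z \subset C$, and the section $s$ corresponds to the canonical inclusion. From the exact sequence $0 \to \O_C \xrightarrow{s} \F \to \O_Z \to 0$ we get
\[
-1 = \chi(\F) = \chi(\O_C) + \ell.
\]
Since $C$ has arithmetic genus $4$, we have $\chi(\O_C) = -3$, hence $\ell = 2$. Conversely, any such pair $(\O_C \hookrightarrow \F)$ with $Z \subset C$ of length $2$ is $\alpha$-semi-stable for $\alpha > 1$: a destabilizing subpair would have to contain the image of $s$, hence equal $(\Gamma, \F)$, while a destabilizing subsheaf $\E \subset \F$ not containing $s$ would contradict the purity and rank-one nature of $\F$ on $C$. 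This sets up a bijection, functorial in families, between $\M^{\infty}(3m + 3n - 1)$ and the flag Hilbert scheme $\H$ parametrizing pairs $(Z \subset C)$ with $C$ a curve of bidegree $(3, 3)$ and $Z$ a zero-dimensional subscheme of length $2$ contained in $C$. By \cite[Proposition B8]{pandharipande_thomas} this bijection is an isomorphism of schemes.
\end{proof}
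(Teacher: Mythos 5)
Your proposal is correct and follows essentially the same route as the paper, which simply invokes \cite[Proposition B8]{pandharipande_thomas} (exactly as it does for Proposition \ref{M^infinity}), the only content being the numerical bookkeeping $\chi(\F) - \chi(\O_C) = -1 - (1 - 4) = 2$ that pins down the length of $Z$. The one imprecision worth noting is that the cokernel of $s \colon \O_C \to \F$ is a length-$2$ sheaf $Q$ that need not literally be $\O_Z$; in the Pandharipande--Thomas dictionary the subscheme $Z$ is recovered from $(\F, s)$ by duality on the Gorenstein curve $C$, but this does not affect your length computation or the conclusion.
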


\noindent
In particular, $\M^{\infty}(3m + 3n - 1)$ is a bundle with base $\Hilb_{\PP^1 \times \PP^1}(2)$ and fiber $\PP^{13}$.

\begin{remark}
\label{flipping_base_2}
From the proof of Proposition \ref{walls_2} we see that the strictly $\alpha$-semi-stable locus in $\M^1(3m + 3n - 1)$
has two connected components:
\[
\M^{0+}(3m + 2n -1) \times \M(n) \qquad \text{and} \qquad \M^{0+}(2m + 3n - 1) \times \M(m).
\]
A point in $\M(n)$ is of the form $\O_L(0, -1)$, where $L \subset \PP^1 \times \PP^1$ is a line of bidegree $(1, 0)$.
\end{remark}

\noindent
Consider the flipping loci
\begin{align*}
F^{\infty}_1 & = \rho_{\infty}^{-1}(\M^{0+}(3m + 2n - 1) \times \M(n)) \subset \M^{\infty}(3m + 3n - 1), \\
F^{\infty}_2 & = \rho_{\infty}^{-1}(\M^{0+}(2m + 3n - 1) \times \M(m)) \subset \M^{\infty}(3m + 3n - 1), \\
F^{\infty} & = F^{\infty}_1 \cup F^{\infty}_2, \\
F^0_1 & = \rho_0^{-1}(\M^{0+}(3m + 2n - 1) \times \M(n)) \subset \M^{0+}(3m + 3n - 1), \\
F^0_2 & = \rho_0^{-1}(\M^{0+}(2m + 3n - 1) \times \M(m)) \subset \M^{0+}(3m + 3n - 1), \\
F^0 & = F^0_1 \cup F^0_2.
\end{align*}
Over a point $(\Lambda_1, \Lambda_7) \in \M^{0+}(3m + 2n - 1) \times \M(n)$, $F^{\infty}_1$ has fiber $\PP(\Ext^1(\Lambda_1, \Lambda_7))$
and $F^0_1$ has fiber $\PP(\Ext^1(\Lambda_7, \Lambda_1))$.
Over a point $(\Lambda_1', \Lambda_7') \in \M^{0+}(2m + 3n - 1) \times \M(m)$, $F^{\infty}_2$ has fiber $\PP(\Ext^1(\Lambda_1', \Lambda_7'))$
and $F^0_2$ has fiber $\PP(\Ext^1(\Lambda_7', \Lambda_1'))$.

\begin{proposition}
\label{flipping_bundles_2}
The flipping loci $F^{\infty}_1$, $F^{\infty}_2$, $F^0_1$, $F^0_2$ are smooth bundles with fiber $\PP^2$.
\end{proposition}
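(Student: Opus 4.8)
The plan is to follow the proof of Proposition \ref{flipping_bundles} word for word. By Remark \ref{flipping_base}, $\M^{0+}(3m + 2n - 1) \simeq \PP^{11}$, and $\M(n) \simeq \PP^1$, since a sheaf in $\M(n)$ is determined by the line $L$ of bidegree $(1, 0)$ supporting it, and such lines are parametrized by $\PP(\H^0(\O(1, 0))) = \PP^1$. Hence the common base of $F^{\infty}_1$ and $F^0_1$ is the smooth variety $\PP^{11} \times \PP^1$, and likewise for $F^{\infty}_2$ and $F^0_2$. As with the flipping loci of Proposition \ref{flipping_bundles}, each of $F^{\infty}_i$ and $F^0_i$ fibers over this base with fiber the projectivization of the relevant first extension group, so it suffices to show these extension groups are $3$-dimensional at every point; smoothness of the flipping loci then follows from smoothness of the base and constancy of the fiber dimension.

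First I would compute $\Ext^1(\Lambda_1, \Lambda_7)$, where $\Lambda_1 = (\Gamma_1, \E_1)$ with $\Gamma_1 = \H^0(\E_1)$ and $\E_1 = \O_C$ for a curve $C$ of bidegree $(2, 3)$, and $\Lambda_7 = (0, \O_L(0, -1))$ for a line $L$ of bidegree $(1, 0)$. On $L \simeq \PP^1$ we have $\O_L(0, -1) \simeq \O_{\PP^1}(-1)$, so $\H^0(\O_L(0, -1)) = 0$ and $\H^1(\O_L(0, -1)) = 0$, and the exact sequence of \cite[Corollaire 1.6]{he} gives $\Ext^1(\Lambda_1, \Lambda_7) \simeq \Ext^1(\E_1, \O_L(0, -1))$. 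Applying $\Hom(-, \O_L(0, -1))$ to resolution (\ref{E_1}), and using $\Hom(\O(-2, -3), \O_L(0, -1)) = \H^0(\O_L(2, 2)) \simeq \CC^3$ together with the vanishing just noted, I get $\Ext^1(\E_1, \O_L(0, -1)) \simeq \CC^3$; every term here is manifestly independent of $C$. Thus $F^{\infty}_1$ has fiber $\PP^2$.

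Next I would compute $\Ext^1(\Lambda_7, \Lambda_1)$. Since $\Gamma_7 = 0$, \cite[Corollaire 1.6]{he} gives $\Ext^1(\Lambda_7, \Lambda_1) \simeq \Ext^1(\O_L(0, -1), \E_1)$. Applying $\Hom(-, \E_1)$ to the sequence $0 \lra \O(-1, -1) \lra \O(0, -1) \lra \O_L(0, -1) \lra 0$ yields
\begin{multline*}
0 \lra \Hom(\O_L(0, -1), \E_1) \lra \H^0(\E_1(0, 1)) \lra \H^0(\E_1(1, 1)) \\
\lra \Ext^1(\O_L(0, -1), \E_1) \lra \H^1(\E_1(0, 1)) \lra \H^1(\E_1(1, 1)).
\end{multline*}
From resolution (\ref{E_1}) one computes, uniformly in $C$, that $\H^0(\E_1(0, 1)) \simeq \CC^2$, $\H^1(\E_1(0, 1)) \simeq \CC$, $\H^0(\E_1(1, 1)) \simeq \CC^4$ and $\H^1(\E_1(1, 1)) = 0$. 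The map $\H^0(\E_1(0, 1)) \to \H^0(\E_1(1, 1))$ is multiplication by an equation of $L$, so its kernel consists of sections of $\O_C(0, 1)$ supported on $L \cap C$; since $\O_C$ is pure (being the structure sheaf of a Cartier divisor on a smooth surface) this kernel vanishes: if $L \not\subset C$ then $\O_C(0, 1)$ has no torsion sections, while if $L \subset C$ the subsheaf of $\O_C(0, 1)$ of sections supported on $L$ is a line bundle of negative degree on $L$, so it has no global sections. Hence $\Hom(\O_L(0, -1), \E_1) = 0$ and $\Ext^1(\O_L(0, -1), \E_1) \simeq \CC^3$, so $F^0_1$ has fiber $\PP^2$. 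Finally $F^{\infty}_2$ and $F^0_2$ have fiber $\PP^2$ by the automorphism of $\PP^1 \times \PP^1$ interchanging the two factors.

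The main point requiring care is that these dimensions stay constant over the entire base, in particular over reducible or non-reduced $C$ having $L$ as a component; this is precisely where purity of $\O_C$ is needed, in place of a generic-point argument. The cohomological computations themselves involve only line bundles on $\PP^1 \times \PP^1$ and on a line, hence are uniform; and once the fiber dimension is constant equal to $3$ and the base is smooth, the relative extension sheaves are locally free of rank $3$ and the flipping loci are the associated $\PP^2$-bundles.
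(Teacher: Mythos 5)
Your proposal is correct and follows essentially the same route as the paper: the exact sequence of \cite[Corollaire 1.6]{he} reduces each computation to $\Ext^1(\E_1, \O_L(0,-1))$ and $\Ext^1(\O_L(0,-1), \E_1)$, which are computed from resolution (\ref{E_1}) and the resolution of $\O_L(0,-1)$, giving $\CC^3$ in both cases, with the second component handled by symmetry. The only difference is that you spell out the vanishing $\Hom(\O_L(0,-1), \E_1) = 0$ (via purity of $\O_C$) and the constancy of the fiber dimension over the base, points the paper leaves implicit.
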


\begin{proof}
Choose $\Lambda_1 = (\Gamma_1, \E_1)$ and $\Lambda_7 = (0, \O_L(0, -1))$.
From \cite[Corollaire 1.6]{he} we have the exact sequence
\begin{multline*}
0 = \Hom(\Gamma_1, \H^0(\O_L(0, -1))) \lra \Ext^1(\Lambda_1, \Lambda_7) \\
\lra \Ext^1(\E_1, \O_L(0, -1)) \lra \Hom(\Gamma_1, \H^1(\O_L(0, -1))) = 0.
\end{multline*}
From resolution (\ref{E_1}) we get the long exact sequence
\begin{multline*}
0 = \H^0(\O_L(0, -1)) \lra \H^0(\O_L(2, 2)) \simeq \CC^3 \lra \Ext^1(\E_1, \O_L(0, -1)) \\
\lra \H^1(\O_L(0, -1)) = 0.
\end{multline*}
Thus, $\Ext^1(\Lambda_1, \Lambda_7) \simeq \CC^3$. From \cite[Corollaire 1.6]{he} we have the exact sequence
\begin{multline*}
0 = \Hom(0, \H^0(\E_1)/\Gamma_1) \lra \Ext^1(\Lambda_7, \Lambda_1) \\
\lra \Ext^1(\O_L(0, -1), \E_1) \lra \Hom(0, \H^1(\E_1)) = 0.
\end{multline*}
From the short exact sequence
\[
0 \lra \O(-1, -1) \lra \O(0, -1) \lra \O_L(0, -1) \lra 0
\]
we get the long exact sequence
\begin{align*}
0 = & \Hom(\O_L(0, -1), \E_1) \lra \H^0(\E_1(0, 1)) \simeq \CC^2 \lra \H^0(\E_1(1, 1)) \simeq \CC^4 \\
\lra & \Ext^1(\O_L(0, -1), \E_1) \lra \H^1(\E_1(0, 1)) \simeq \CC \phantom{{}^2} \lra \H^1(\E_1(1, 1)) = 0.
\end{align*}
From these exact sequences we obtain $\Ext^1(\Lambda_7, \Lambda_1) \simeq \CC^3$.
\end{proof}

\begin{lemma}
\label{ext^2_2}
For $\Lambda \in F^0$ we have $\Ext^2(\Lambda, \Lambda) = 0$.
\end{lemma}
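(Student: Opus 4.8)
The plan is to follow the template of the proof of Lemma \ref{ext^2}. By the symmetry of $\PP^1 \times \PP^1$ that interchanges the two rulings, it is enough to treat a point $\Lambda \in F^0_1$, which sits in an extension
\[
0 \lra \Lambda_1 \lra \Lambda \lra \Lambda_7 \lra 0,
\]
with $\Lambda_1 = (\Gamma_1, \E_1)$ and $\Lambda_7 = (0, \O_L(0, -1))$ as in the proof of Proposition \ref{flipping_bundles_2}. Applying $\Hom(-, \Lambda)$ to this sequence, and then $\Hom(\Lambda_1, -)$ and $\Hom(\Lambda_7, -)$, and using that $\Ext^3$ vanishes for pairs whose underlying sheaves are supported in dimension $1$ (the correction terms in \cite[Corollaire 1.6]{he} involve $\H^2$ of a $1$-dimensional sheaf, hence vanish), one reduces the statement to showing that $\Ext^2(\Lambda_i, \Lambda_j) = 0$ for all $i, j \in \{1, 7\}$.

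The vanishing $\Ext^2(\Lambda_1, \Lambda_1) = 0$ is precisely what was shown inside the proof of Lemma \ref{ext^2}(i): there $\Lambda_1$ is the same point of $\M^{0+}(3m + 2n - 1)$, and the argument --- $\Ext^1(\Lambda_1, \Lambda_1)$ is the tangent space of $\PP^{11}$, so it has dimension $11$; $\Ext^1(\E_1, \E_1) \simeq \CC^{13}$ via resolution (\ref{E_1}); $\Ext^2(\E_1, \E_1) \simeq \Hom(\E_1, \E_1 \tensor \omega)^* = 0$; then a dimension count in the exact sequence of \cite[Corollaire 1.6]{he} --- applies verbatim. For the three mixed groups I would simply continue, one term further to the right, the long exact sequences already written in the proof of Proposition \ref{flipping_bundles_2}. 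Because the residual $\Hom(\Gamma, \H^2(-))$ terms vanish, the sequence of \cite[Corollaire 1.6]{he} gives isomorphisms $\Ext^2(\Lambda_1, \Lambda_7) \simeq \Ext^2(\E_1, \O_L(0,-1))$, $\Ext^2(\Lambda_7, \Lambda_1) \simeq \Ext^2(\O_L(0,-1), \E_1)$ and $\Ext^2(\Lambda_7, \Lambda_7) \simeq \Ext^2(\O_L(0,-1), \O_L(0,-1))$. Resolution (\ref{E_1}) then places $\Ext^2(\E_1, \O_L(0,-1))$ between $\H^1(\O_L(2,2))$ and $\H^2(\O_L(0,-1))$, and the sequence $0 \to \O(-1,-1) \to \O(0,-1) \to \O_L(0,-1) \to 0$ places $\Ext^2(\O_L(0,-1), \E_1)$ between $\H^1(\E_1(1,1))$ and $\H^2(\E_1(0,1))$, and $\Ext^2(\O_L(0,-1), \O_L(0,-1))$ between $\H^1(\O_L(1,0))$ and $\H^2(\O_L(0,0))$; every one of these groups vanishes --- the $\H^2$'s because the sheaves are $1$-dimensional, the $\H^1$'s either because they already appear (equal to zero) in the proof of Proposition \ref{flipping_bundles_2} or because they are the $\H^1$ of a nonnegative line bundle on $L \simeq \PP^1$.

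There is no serious obstacle here; the lemma is a mechanical continuation of the homological bookkeeping already carried out in Proposition \ref{flipping_bundles_2} and Lemma \ref{ext^2}. The only point deserving attention is the reduction step --- one should make sure that the three applications of the long exact $\Ext$-sequence for pairs genuinely combine to force $\Ext^2(\Lambda, \Lambda) = 0$ --- and this works precisely because $\Lambda$ has $1$-dimensional support, so that $\H^2$ of its sheaf part, and hence all relevant $\Ext^3$-terms, vanish.
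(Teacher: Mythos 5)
Your proposal is correct and follows essentially the same route as the paper: reduce by symmetry to $F^0_1$, use the extension $0 \to \Lambda_1 \to \Lambda \to \Lambda_7 \to 0$ to reduce to the vanishing of $\Ext^2(\Lambda_i, \Lambda_j)$ for $i, j \in \{1, 7\}$, and then run the argument of Lemma \ref{ext^2}(i) with $\Lambda_2$ replaced by $\Lambda_7$. The only (harmless) deviation is that you check the mixed vanishings by continuing the long exact sequences from the resolutions of $\E_1$ and $\O_L(0,-1)$, whereas the paper disposes of them via Serre duality and stability as in Lemma \ref{ext^2}(i); your remark about $\Ext^3$ is unnecessary, since exactness in the middle of the three $\Ext$-sequences already suffices.
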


\begin{proof}
It is enough to consider the case when $\Lambda \in F^0_1$, the case when $\Lambda \in F^2_0$ being obtained by symmetry.
In view of the exact sequence
\[
0 \lra \Lambda_1 \lra \Lambda \lra \Lambda_7 \lra 0
\]
it is enough to show that $\Ext^2(\Lambda_i, \Lambda_j) = 0$ for $i, j = 1, 7$.
To obtain these vanishings we proceed exactly as in the proof of Lemma \ref{ext^2}(i) with $\Lambda_2$ replaced by $\Lambda_7$.
\end{proof}

\begin{theorem}
\label{wall_crossing_2}
The space $\M^{0+}(3m + 3n - 1)$ is smooth. We have the following commutative diagram expressing the variation of $\M^{\alpha}(3m + 3n - 1)$
as $\alpha$ crosses the wall:
\[
\xymatrix
{
& \widetilde{\M}(3m + 3n - 1) \ar[dl]_-{\beta_{\infty}} \ar[dr]^-{\beta_0} \\
\M^{\infty}(3m + 3n - 1) \ar[dr]_-{\rho_{\infty}} & & \M^{0+}(3m + 3n - 1) \ar[dl]^-{\rho_0} \\
& \M^1(3m + 3n - 1)
}
\]
Here $\beta_{\infty}$ is the blow-up with center $F^{\infty}$ and $\beta_0$ is the blow-up with center $F^0$.
The exceptional divisor $\widetilde{F}^{\infty}$ has two connected components: $\widetilde{F}^{\infty}_1$ and $\widetilde{F}^{\infty}_2$.
We view $\widetilde{F}^{\infty}_1$ as a $\PP^2 \times \PP^2$-bundle over $\M^{0+}(3m + 2n - 1) \times \M(n)$
and $\widetilde{F}^{\infty}_2$ as a $\PP^2 \times \PP^2$-bundle over $\M^{0+}(2m + 3n - 1) \times \M(m)$.
Thus, $\beta_0$ contracts $\widetilde{F}^{\infty}_1$ in the direction of the first $\PP^2$ into $F^0_1$
and it contracts $\widetilde{F}^{\infty}_2$ in the direction of the first $\PP^2$ into $F^0_2$.
\end{theorem}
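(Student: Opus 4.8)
The plan is to follow exactly the template of Theorem \ref{wall_crossing}, adapted to the simpler geometry of a single wall. First I would establish smoothness of the endpoint $\M^{\infty}(3m + 3n - 1)$: by Proposition \ref{M^infinity_2} it is the flag Hilbert scheme of length-$2$ subschemes contained in curves of bidegree $(3,3)$, and the argument of Proposition \ref{Hilbert_scheme} (using $\H^1(\I_Z(3,3)) = 0$ for $Z$ of length $2$, which is immediate) shows it is a $\PP^{13}$-bundle over $\Hilb_{\PP^1 \times \PP^1}(2)$, hence smooth. By Proposition \ref{flipping_bundles_2} the flipping locus $F^{\infty} = F^{\infty}_1 \cup F^{\infty}_2$ is a disjoint union of smooth $\PP^2$-bundles, so I may form the blow-up $\beta_{\infty} \colon \widetilde{\M}(3m+3n-1) \to \M^{\infty}(3m+3n-1)$ with center $F^{\infty}$.

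The next step is to identify the exceptional divisor and construct the blow-down $\beta_0$. Over a point $(\Lambda_1, \Lambda_7)$ of $\M^{0+}(3m+2n-1) \times \M(n)$, the component $F^{\infty}_1$ has fiber $\PP(\Ext^1(\Lambda_1, \Lambda_7)) = \PP^2$ while $F^0_1 \subset \M^{0+}(3m+3n-1)$ should have fiber $\PP(\Ext^1(\Lambda_7, \Lambda_1)) = \PP^2$; the exceptional component $\widetilde{F}^{\infty}_1$ over $(\Lambda_1,\Lambda_7)$ is the projectivization of the normal bundle of $F^{\infty}_1$, which by the standard deformation theory of pairs (the space of non-split extensions modulo those fixing the sub-pair) is $\PP(\Ext^1(\Lambda_1,\Lambda_7)) \times \PP(\Ext^1(\Lambda_7,\Lambda_1)) = \PP^2 \times \PP^2$. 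This exhibits $\widetilde{F}^{\infty}_1$ as a $\PP^2 \times \PP^2$-bundle over $\M^{0+}(3m+2n-1) \times \M(n)$, and symmetrically for $\widetilde{F}^{\infty}_2$. I would then define $\beta_0$ on $\widetilde{F}^{\infty}$ as the contraction of each component in the direction of the first $\PP^2$ factor, and on the complement of $\widetilde{F}^{\infty}$ as the birational identification with $\M^{0+}(3m+3n-1) \setminus F^0$ coming from the wall-crossing; one checks this glues to a morphism, landing in a space whose restriction to $F^0$ is the $\PP^2 \times \PP^2 \to \PP^2$ projection onto $F^0$.

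To conclude that $\beta_0$ is the blow-up of $\M^{0+}(3m+3n-1)$ along $F^0$, I would invoke the Universal Property of the blow-up exactly as in Theorem \ref{wall_crossing}: this requires $F^0$ smooth and $\M^{0+}(3m+3n-1)$ smooth. The first follows from Proposition \ref{flipping_bundles_2}; the second follows from Lemma \ref{ext^2_2}, which gives $\Ext^2(\Lambda,\Lambda) = 0$ for every $\Lambda$ in the flipping locus $F^0$, hence (together with smoothness away from $F^0$, where $\M^{0+}(3m+3n-1)$ agrees with the smooth $\M^{\infty}(3m+3n-1)$ up to the blow-up/blow-down) unobstructedness of the pair-deformation functor everywhere. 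The main obstacle I anticipate is not any single cohomology computation — these are all routine given the resolutions (\ref{E_1}) already in hand — but rather the bookkeeping needed to verify that the map $\beta_0$ is genuinely a morphism across the wall and that the two $\PP^2$ factors of $\widetilde{F}^{\infty}_i$ get contracted in the correct direction, i.e.\ that $\beta_{\infty}$ contracts toward $\PP(\Ext^1(\Lambda_1,\Lambda_7))$ (the "old" extension data) while $\beta_0$ contracts toward $\PP(\Ext^1(\Lambda_7,\Lambda_1))$ (the "new" one). This is handled by tracking which sub-pair is destabilized as $\alpha$ decreases through $\alpha = 1$, just as in \cite[Theorem 5.7]{genus_two}, but it is the step that demands care rather than calculation.
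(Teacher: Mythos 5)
Your proposal is correct and follows essentially the same route as the paper, which simply argues by analogy with Theorem \ref{wall_crossing} (itself modeled on \cite[Theorem 5.7]{genus_two}), using the smoothness of $\M^{\infty}(3m+3n-1)$ from Proposition \ref{M^infinity_2}, the smoothness of the flipping loci from Proposition \ref{flipping_bundles_2}, and the vanishing $\Ext^2(\Lambda,\Lambda)=0$ from Lemma \ref{ext^2_2} to apply the Universal Property of the blow-up. Your extra detail on identifying the exceptional divisor with the two $\Ext^1$ directions is exactly the content the paper leaves implicit.
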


\begin{proof}
The proof of this theorem is analogous to the proof of Theorem \ref{wall_crossing} and makes use of Proposition \ref{flipping_bundles_2}
and Lemma \ref{ext^2_2} in a similar fashion.
\end{proof}

\noindent
{\bf Proof of Theorem \ref{poincare_polynomial}.} By Theorem \ref{wall_crossing}, we have
\begin{align*}
\Poly(\MM^{0+}) = \Poly(\MM^{\infty}) & + (\Poly(\PP^2) - \Poly(\PP^4)) \Poly(\M^{0+}(3m + 2n - 1) \times \M(n + 2)) \\
& + (\Poly(\PP^2) - \Poly(\PP^4)) \Poly(\M^{0+}(2m + 3n - 1) \times \M(m + 2)) \\
& + (\Poly(\PP^2) - \Poly(\PP^3)) \Poly(\M^{0+}(3m + 2n) \times \M(n + 1)) \\
& + (\Poly(\PP^2) - \Poly(\PP^3)) \Poly(\M^{0+}(2m + 3n) \times \M(m + 1)) \\
& + (\Poly(\PP^3) - \Poly(\PP^4)) \Poly(\M^{0+}(2m + 2n) \times \M(m + n + 1)).
\end{align*}
By Proposition \ref{M^infinity}, Proposition \ref{Hilbert_scheme} and Remark \ref{flipping_base}, we have further
\begin{align*}
\Poly(\MM^{0+}) = \Poly(\PP^{11}) \Poly(\Hilb_{\PP^1 \times \PP^1}(4)) & + 2 (\Poly(\PP^2) - \Poly(\PP^4)) \Poly(\PP^{11}) \Poly(\PP^1) \\
& + 2 (\Poly(\PP^2) - \Poly(\PP^3)) \Poly(\PP^{10}) \Poly(\PP^1 \times \PP^1) \Poly(\PP^1) \\
& + \phantom{2} (\Poly(\PP^3) - \Poly(\PP^4)) \Poly(\PP^8) \Poly(\PP^3).
\end{align*}
By Theorem \ref{wall_crossing_2}, we have
\begin{align*}
\Poly(\M^{0+}(3m + 3n - 1)) = & \Poly(\M^{\infty}(3m + 3n - 1) \\
& + (\Poly(\PP^2) - \Poly(\PP^2)) \Poly(\M^{0+}(3m + 2n - 1) \times \M(n)) \\
& + (\Poly(\PP^2) - \Poly(\PP^2)) \Poly(\M^{0+}(2m + 3n - 1) \times \M(m)) \\
= & \Poly(\M^{\infty}(3m + 3n - 1).
\end{align*}
By Proposition \ref{M^infinity_2}, we have further
\[
\Poly(\M^{0+}(3m + 3n - 1)) = \Poly(\PP^{13}) \Poly(\Hilb_{\PP^1 \times \PP^1}(2)).
\]
According to \cite[Theorem 0.1]{goettsche},
\begin{align*}
\Poly(\Hilb_{\PP^1 \times \PP^1}(2)) & = \xi^4 + 3\xi^3 + 6\xi^2 + 3\xi + 1, \\
\Poly(\Hilb_{\PP^1 \times \PP^1}(4)) & = \xi^8 + 3\xi^7 + 10\xi^6 + 22\xi^5 + 33\xi^4 + 22\xi^3 + 10\xi^2 + 3\xi + 1.
\end{align*}
From Proposition \ref{poincare_formula} we finally obtain
\begin{align*}
\Poly(\MM) = & \frac{\xi^{12} - 1}{\xi - 1}(\xi^8 + 3\xi^7 + 10\xi^6 + 22\xi^5 + 33\xi^4 + 22\xi^3 + 10\xi^2 + 3\xi + 1) \\
& - 2(\xi^3 + \xi^4) \frac{\xi^{12} - 1}{\xi - 1} (\xi + 1) - 2\xi^3 \frac{\xi^{11} - 1}{\xi - 1} (\xi + 1)^3 - \xi^4 \frac{\xi^9 - 1}{\xi - 1} \frac{\xi^4 - 1}{\xi - 1} \\
& - \xi \frac{\xi^{14} - 1}{\xi - 1} (\xi^4 + 3\xi^3 + 6\xi^2 + 3\xi + 1).
\end{align*}


\begin{thebibliography}{99}

\bibitem{ballico_huh} E. Ballico, S. Huh.
\emph{Stable sheaves on a smooth quadric surface with linear Hilbert bipolynomials.}
Sci. World J. (2014), article ID 346126.

\bibitem{buchdahl} N. P. Buchdahl.
\emph{Stable 2-bundles on Hirzebruch surfaces.}
Math. Z. {\bf 194} (1987), 143--152.

\bibitem{choi_chung} J. Choi, K. Chung.
\emph{Moduli spaces of $\alpha$-stable pairs and wall-crossing on ${\mathbb P}^2$.}
J. Math. Soc. Japan {\bf 68} (2016), 685--709.

\bibitem{choi_katz_klemm} J. Choi, S. Katz, A. Klemm.
\emph{The refined BPS index from stable pair invariants.}
Commun. Math. Phys. {\bf 328} (2014), 903--954.

\bibitem{chung_moon_2} K. Chung, H.-B. Moon.
\emph{Birational geometry of the moduli space of pure sheaves on quadric surface}.
arXiv:1703.00230

\bibitem{goettsche} L. G\"ottsche.
\emph{The Betti numbers of the Hilbert scheme of points on a smooth projective surface.}
Math. Ann. {\bf 286} (1990), 193--207.

\bibitem{he} M. He.
\emph{Espaces de modules de syst\`emes coh\'erents.}
Int. J. Math. {\bf 9} (1998), 545--598.

\bibitem{lepotier_asterisque} J. Le Potier.
\emph{Syst\`emes coh\'erents et structures de niveau.}
Ast\'erisque {\bf 214}, 1993.

\bibitem{lepotier} J. Le Potier.
\emph{Faisceaux semi-stables de dimension $1$ sur le plan projectif}.
Rev. Roumaine Math. Pures Appl. {\bf 38} (1993), 635--678.

\bibitem{genus_two} M. Maican.
\emph{Moduli of sheaves supported on curves of genus two contained in a smooth quadric surface.}
arXiv:1612.03566

\bibitem{genus_three} M. Maican.
\emph{Moduli of stable sheaves on a quadric surface supported on curves of genus three}.
arXiv:1704.00810

\bibitem{pandharipande_thomas} R. Pandharipande, R. P. Thomas.
\emph{Stable pairs and BPS invariants.}
J. Amer. Math. Soc. {\bf 23} (2010), 267--297.

\bibitem{simpson} C.T. Simpson.
\emph{Moduli of representations of the fundamental group of a smooth projective variety I.}
Inst. Hautes \'Etudes Sci. Publ. Math. {\bf 79} (1994), 47--129.

\end{thebibliography}
\end{document}